\newcommand{\R}{\mathbb{R}}
\newcommand{\E}{\mathbb{E}}
\renewcommand{\P}{\mathbb{P}}
\let\leq=\leqslant
\let\geq=\geqslant
\let\le=\leqslant
\newtheorem{proposition}{Proposition}
\newtheorem{theorem}[proposition]{Theorem}
\newtheorem{lemma}[proposition]{Lemma}
\newtheorem{corollary}[proposition]{Corollary}
\newtheorem{definition}[proposition]{Definition}
\newtheorem{example}[proposition]{Example}
\theoremstyle{definition}
\newtheorem{remark}[proposition]{Remark}
\numberwithin{equation}{section}
\numberwithin{proposition}{section}
\newcommand{\bR}[1]{\left(#1\right)}
\def\dfrac#1#2{\lower0.12ex\hbox{\large$\textstyle\frac{#1}{#2}$}}
\def\and{%
  \end{tabular}%
  \begin{tabular}[t]{c}}
\setlist[itemize]{noitemsep, nolistsep}
\setlist[enumerate]{noitemsep, nolistsep}
\title{
{\bf\Large Multivariate Poisson approximation 
of joint subgraph counts in random graphs via size-biased couplings}
}
\author{Eulalia Nualart\\
\small Universitat Pompeu Fabra \\ [-0.5ex]
\small Barcelona School of Economics \\ [-0.5ex]
\small Ram\'on Trias Fargas 25-27, Barcelona 08005, Spain \\ [-0.5ex]
\small\tt eulalia.nualart@upf.edu
\and
\hspace{-.6cm}Rui-Ray Zhang \\
\hspace{-.6cm}\small Barcelona School of Economics \\ [-0.5ex]
\hspace{-.6cm}\small\tt rui.zhang@bse.eu}
\date{}
\begin{document}

\maketitle

\begin{abstract}

Using Chen-Stein method in combination with size-biased couplings,
we obtain the multivariate Poisson approximation
in terms of the  Wasserstein distance.
As applications, 
we study the multivariate Poisson approximation of the distribution of joint subgraph counts in an Erd\H{o}s-Rényi random graph
and the multivariate hypergeometric distribution
giving explicit convergence rates.

\vskip 12pt

\noindent{\it Keywords}: Poisson approximation; Chen-Stein method;
size-biased couplings; random graphs;
hypergeometric distribution.
	
\noindent{\it \noindent AMS 2010 subject classification:}
60F05; 60C05; 05C80.
\end{abstract}


\section{Introduction}

Let $\mathcal G(n, p)$ denote  the Erd\H{o}s-Rényi random graph  on $n$ vertices with edge
probability $p=p(n) \in (0, 1)$.
Let $H$ be a strictly balanced graph with $v_H>0$ vertices and $e_H$ edges,
and let $W$ denote the number of copies of  $H$ in $\mathcal G(n, p)$.
It is well-known since the 80's (see e.g. Janson, \L{}uczak, and Ruci\'nski \cite[Theorem 3.19]{J00}) that using the method of moments, 
if $np^{d_H} \rightarrow c>0$ as $n \rightarrow \infty$, 
where $d_H = e_H/v_H$ denotes the density of $H$,
then $W$ converges in distribution to a Poisson random variable $P_{\lambda}$ with parameter $\lambda=c^{v_H}/a_H$, 
where $a_H$ denotes the size of the automorphism group of $H$. 
Furthermore, using the Chen-Stein method, one can obtain an  explicit rate of convergence 
in terms of the total variation distance of both distributions, see Barbour, Holst, and Janson \cite[Chapter 5]{BHJ} or \cite[Example 6.26]{J00}. More specifically, if $H$ has no isolated vertices, then as $n \rightarrow \infty$,
$$
d_{TV}(W, P_{\lambda})=O(n^{-\gamma}),
$$
where $
\gamma 
= \min\left\{v_{H'} -  e_{H'}/d_{H} 
: H' \subsetneq H, e_{H'} > 0\right\} >0
$
and $\lambda=\E(W) \rightarrow c^{v_H}/a_H$.
This is based on the notion of size-biased coupling that we recall in Section 2. See Ross \cite[Section 4]{R} for an extended survey  of this topic.

One of the goals of this paper is to extend this rate of convergence to the joint convergence of several subgraph counts in terms of the Wasserstein distance, which is stronger than the total variation one. More specifically,  
let $(H_1,\ldots,H_d)$ be a sequence of $d$ distinct strictly balanced graphs with $0 < v_{H_i} \leq n$ vertices, $e_{H_i}>0$ edges,  same density $\alpha=d_{H_i}$, for $i=1,\ldots, d$, and no isolated vertices. Let ${\bf W}=(W_1, \ldots, W_d)$ , where for $i=1,\ldots,d$, $W_i$ denotes the number of copies of $H_i$ in $\mathcal G(n,p)$. Then, we show in Theorem \ref{t5b}, that if for some constant $c>0$,
$
p = cn^{-1/\alpha}(1 + o(1))$, then as $n \rightarrow \infty$,
\[
d_{\mathrm{W}}(\mathbf{W}, {\bf P_{\boldsymbol{\lambda}}})=O(n^{-\gamma}),
\]
where ${\bf P}_{{\bm \lambda}}=(P_1, \ldots, P_d)$ denotes a Poisson random vector 
with independent components and expectation ${\bm \lambda}=(\lambda_1, \ldots, \lambda_d)$
with $\lambda_i =\E(W_i)\rightarrow c^{\alpha v_{H_i}}/a_{H_i}$,
and
$$
\gamma 
= \min_{i\in \{1,\ldots,d\}} \min\{v_{H'_i} - e_{H'_i}/\alpha : H'_i \subsetneq H_i, e_{H'_i} > 0\}>0.
$$
If the graphs do not have the same density, but there is a critical common density $\alpha$ shared by a subset of them, our Theorem \ref{t5b} also studies the behavior of $W_i$ for the graphs that do not have density equal to $\alpha$.
In the case that the graphs are not necessarily strictly balanced and $p \rightarrow 0$ as $n \rightarrow \infty$, we obtain a bound for the Wasserstein distance of both distributions that can be applied to a wide class of examples in order to show multivariate Poisson convergence, see Corollary \ref{t5} and Remark \ref{re}.

In order to achieve this first aim, we use the multivariate Poisson approximation developed by Pianoforte and Turin \cite{PT}, which differs from the multivariate extension given by Goldstein and Rinott \cite{GR} and that by Arratia, Goldstein, and Gordon \cite{AGG}. In  \cite{PT}, the authors use Stein's equation for the Poisson distribution iteratively in order to derive a multivariate Poisson approximation bound in terms of the Wasserstein distance for general integer-valued random  vectors. As applications, they obtain the multivariate Poisson approximation of dependent Bernoulli sums and Poisson process approximation of point processes  of $U$-statistic structure. As  the authors observe in  their paper, a particular case of  their bound is when one can find random vectors with an exact size-biased distribution in a multivariate  setting that  differs from previous multivariate extensions  such as \cite{GR}.  In our paper, we exploit this fact by formalizing the multivariate notion of size-biased coupling and studying the case  of multivariate sums of indicators random variables. Then, as a consequence of the multivariate Poisson approximation  theorem obtained in \cite{PT}, we obtain explicit bounds in terms of the Wasserstein distance for increasing and decreasing multivariate size-biased couplings, extending the one-dimensional case (see for e.g. \cite{R}). Although the multivariate Poisson distribution has independent components, our bounds take into account of  the  dependences of  the random vector through its covariances. We then  apply the increasing size-biased multivariate Poisson approximation bound to the joint convergence of several subgraph counts mentioned above. This needs a 
careful analysis of the covariance structure of subgraph counts, extending  the covariance study done in Krokowski and Th\"ale \cite{KT}. In \cite[Theorem 4.2]{KT}, the  rate of convergence to the multivariate Gaussian distribution of joint subgraph counts is achieved  in the case that $p$ is constant, using the fourth differentiable function distance and discrete Malliavin calculus.
Finally, as an application of  the decreasing size-biased multivariate Poisson approximation, we consider the multivariate hypergeomertic distribution.

While we focused on Erd\H{o}s-Rényi random graphs, 
the techniques developed here may potentially be extended to other random graph models,
such as random geometric graphs, etc.
The multivariate size-biased coupling framework could also be useful in other contexts
such as pattern occurrences in random permutations, and local statistics in random discrete structures.
It may be possible to obtain high-order asymptotic probability with smaller error,
by considering clusters of subgraphs, as in Zhang \cite[Chapter 5]{zhang}.

The paper is organized as follows. In Section 2 we develop the general theory of multivariate  size-biased coupling and Poisson approximations.
Then in Section 3 we consider  the two applications, namely the distribution of  subgraph counts  in  random graphs and the multivariate hypergeometric  distribution.

{\bf Notation:}
We use the notation $\mathbb{N}=\{1,2, \ldots \}$ and $\mathbb{N}_0=\{0,1,2, \ldots \}$.
Throughout this paper, 
${\bf P}_{{\bm \lambda}}=(P_1, \ldots, P_d)$ denotes a Poisson distributed random vector 
with independent components and expectation ${\bm \lambda}=(\lambda_1, \ldots, \lambda_d)$.

\section{Size-biased coupling and Poisson approximation}

Size-biased couplings first appeared in the context of Stein's method for
normal approximation by Goldstein and Rinott \cite{GR},
and they are also useful when used in conjunction with Stein's method to obtain Poisson approximation.
Following the survey by Ross \cite{R}, we recall the notion of size-biased coupling.
\begin{definition}[Size-biased coupling]
Let $W  \geq 0$ be a random variable with $\E(W)=\lambda<\infty$. We say  that a  random  variable $\widetilde W$ is a size-biased coupling of $W$ if for all function $f$ satisfying  $\E\vert  Wf(W)\vert  < \infty$, we have
$$
\E(Wf(W))=\lambda \E(f(\widetilde W)).
$$
\end{definition}

Observe that if $W\geq 0$ is  an integer-valued random variable with finite mean $\lambda>0$, then  $\widetilde W$ is a size-biased coupling of   $W$ if and only if for all $k \in \mathbb{N}_0$, 
$$
\P(\widetilde W=k)=\frac{k }{\lambda} \P(W=k).
$$

Applying size-biased couplings gives Poisson approximation for real-valued random variables.
\begin{theorem}{\cite[Theorem 4.13]{R}}
Let $W\geq 0$ be  an integer-valued random variable with $\E(W)=\lambda>0$, 
and let $\widetilde W$ be a size-biased coupling of $W$. 
Then, if $P_{\lambda}$ is a Poisson distributed random variable with parameter $\lambda$, we have
$$
d_{\mathrm{TV}}(W, P_{\lambda}) \leq \min\{1,\lambda\} \E \vert  \widetilde W-1-W \vert,
$$
where the total variation (TV) distance between two distributions $P$ and $Q$ on some finite domain $\mathcal{D}$
is defined by
$$
d_{\mathrm{TV}}(P, Q)
=\dfrac{1}{2} \sum_{x \in \mathcal{D}}|P(x)-Q(x)|
=\max_{S \subseteq \mathcal{D}} | P(S)-Q(S) |.
$$
\end{theorem}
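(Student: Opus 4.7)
The plan is to combine the Poisson version of Stein's equation with the defining identity of the size-biased coupling. Specifically, for each $A\subseteq\mathbb{N}_0$, Stein's lemma for the Poisson distribution furnishes a bounded function $g_A:\mathbb{N}_0\to\mathbb{R}$ solving
\[
\lambda\, g_A(k+1)-k\, g_A(k)=\1_A(k)-\P(P_\lambda\in A),\qquad k\in\mathbb{N}_0.
\]
Evaluating at $W$ and taking expectations yields
\[
\P(W\in A)-\P(P_\lambda\in A)=\lambda\,\E[g_A(W+1)]-\E[W g_A(W)].
\]

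Next, I would apply the defining property of the size-biased coupling with the test function $f=g_A$ (which is bounded, so $\E|W g_A(W)|<\infty$ as $\E W=\lambda<\infty$) to rewrite $\E[W g_A(W)]=\lambda\,\E[g_A(\widetilde W)]$. Substituting and rearranging gives the key identity
\[
\P(W\in A)-\P(P_\lambda\in A)=\lambda\,\E\!\left[g_A(W+1)-g_A(\widetilde W)\right].
\]

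From here the estimate is immediate once we control the first differences of the Stein solution. Letting $\|g_A\|_\Delta:=\sup_{k\geq 0}|g_A(k+1)-g_A(k)|$, writing the difference $g_A(W+1)-g_A(\widetilde W)$ as a telescoping sum of at most $|\widetilde W-1-W|$ consecutive increments yields
\[
\left|\P(W\in A)-\P(P_\lambda\in A)\right|\leq \lambda\,\|g_A\|_\Delta\,\E\bigl|\widetilde W-1-W\bigr|.
\]
Taking the supremum over $A\subseteq\mathbb{N}_0$ gives the total variation distance on the left; on the right, one invokes the classical uniform Stein bound $\|g_A\|_\Delta\leq (1-\e^{-\lambda})/\lambda$, valid for any $A$, and the elementary inequality $1-\e^{-\lambda}\leq \min\{1,\lambda\}$, to conclude
\[
d_{\mathrm{TV}}(W,P_\lambda)\leq \min\{1,\lambda\}\,\E\bigl|\widetilde W-1-W\bigr|.
\]

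The step I would flag as the main technical obstacle is the sharp uniform bound $\lambda\|g_A\|_\Delta\leq 1-\e^{-\lambda}$, which is where the factor $\min\{1,\lambda\}$ actually comes from and which is not a mere computation: one writes $g_A$ explicitly via the recursion $\lambda g_A(k+1)=k g_A(k)+\1_A(k)-\P(P_\lambda\in A)$, iterates it to express $g_A(k+1)-g_A(k)$ as a signed sum of Poisson probabilities, and then optimizes over $A$ and over $k$. Once this analytic lemma is in hand, everything else is the linear Stein--size-bias manipulation above, and the proof is essentially complete. The remaining steps (telescoping the increment, taking the $\sup$ over $A$) are routine.
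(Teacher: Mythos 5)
The paper does not prove this statement---it is quoted directly from Ross \cite[Theorem 4.13]{R}---and your argument is precisely the standard proof given there: Stein's equation for the Poisson distribution, the size-bias identity applied to the Stein solution $g_A$, telescoping the increment, and the uniform bound $\lambda\|\Delta g_A\|_\infty\leq 1-\e^{-\lambda}\leq\min\{1,\lambda\}$ (which is \cite[Lemma 4.4]{R}, the very bound the paper invokes in its remark after Theorem 2.5). Your proof is correct and complete modulo that cited analytic lemma, which you correctly identify as the only nontrivial ingredient.
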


\subsection{Multivariate size-biased coupling}
We extend the size-biased coupling to the multivariate case,
following the result by Pianoforte and Turin \cite{PT}.
\begin{definition}[Multivariate size-biased coupling]
Let ${\bf W}=(W_1, \ldots, W_d)$ be a random vector with $\E(W_i)=\lambda_i < \infty$, for $i=1, \ldots, d$.
Let
${\bf  \widetilde W}=({\bf \widetilde W}^{1}, \ldots, {\bf \widetilde W}^{d})$ be a $d$-dimensional triangular array such that 
${\bf \widetilde W}^{i}=(\widetilde W_1^{i},  \ldots, \widetilde W^{i}_i)$ is an $i$-dimensional random vector for each $i=1, \ldots, d$.
We say that ${\bf \widetilde W}$ is a size-biased coupling of ${\bf W}$ 
if 
\begin{equation} \label{h}
\E(W_i f({\bf W}^i))=\lambda_i \E(f({\bf \widetilde W}^{i})), \quad 
\text{ with }
{\bf W}^i=(W_1, \ldots, W_i)
\end{equation}
for all $i=1, \ldots, d$, and all functions $f:\R^i \rightarrow \R$ satisfying $\E \vert W_i f({\bf W}^i)\vert <\infty$.
\end{definition}

Observe that if ${\bf W}=(W_1, \ldots, W_d)$ takes values in $\mathbb{N}^d_0$, 
$0<\E(W_i)=\lambda_i <\infty$,
and the random vector ${\bf \widetilde W}^{i}$ takes values in $\mathbb{N}^i_0$ for all $i=1, \ldots, d$, then condition (\ref{h}) is equivalent of saying that for all $i=1, \ldots, d$ and ${\bf k}^i=(k_1, \ldots, k_i) \in \mathbb{N}_0^i$, 
we have
\begin{equation} \label{h2}
\P({\bf \widetilde W}^{i}={\bf k}^i)=\frac{k_i }{\lambda_i} \P({\bf W}^i={\bf k}^i).
\end{equation}

The following extends \cite[Corollary 4.14]{R} to the multivariate setting. 
\begin{lemma} \label{c1b}
For all $i=1, \ldots, d$, let $(X^i_1, \ldots, X^i_{n_i})$ be an $n_i$-dimensional random vector formed by indicator random variables
such that for $j=1, \ldots, n_i$,
$
\P(X_j^i=1)=p_{i,j} \in (0,1)$, and set ${\bf W}=(W_1, \ldots, W_d)$,
with $
W_i=\sum_{j=1}^{n_i}  X_j^i$.
For all  $i=1, \ldots, d$ and $\ell=1, \ldots, n_i$, consider 
an $n_1  \cdots n_{i-1} (n_i-1)$-dimensional random vector 
$$
{\bf X}^{i,(i,\ell)}
= \bR{ 
(X_{j}^{1,(i,\ell)})_{j=1}^{n_1}, \ldots, (X_{j}^{i-1,(i,\ell)})_{j=1}^{n_{i-1}},(X_{j}^{i,(i,\ell)})_{j=1,j\neq \ell}^{n_i} 
}
$$
defined on the same probability space as the random vector
$$
{\bf X}^{i,\ell}
= \bR{ 
(X_{j}^1)_{j=1}^{n_1}, \ldots, (X_{j}^{i-1})_{j=1}^{n_{i-1}},(X_{j}^{i})_{j=1, j \neq \ell}^{n_i}
}
$$
satisfying
\begin{align}\label{law}
\mathcal{L}\,({\bf X}^{i,(i,\ell)})=\mathcal{L}\,({\bf X}^{i,\ell} \mid X^i_{\ell}=1).
\end{align}
Let $I_1, \ldots, I_d$ be a sequence of independent random variables, independent of all else, 
such that for all $i=1, \ldots, d$ and $j=1, \ldots, n_i$,
\begin{align}\label{Idef}
\P\left( I_i=j\right)=\frac{p_{i,j}}{\lambda_i},
\qquad \text{ where  }
\lambda_i=\E(W_i)=\sum_{j=1}^{n_i} p_{i,j}.
\end{align}
Let ${\bf  \widetilde W}=({\bf \widetilde W}^{1}, \ldots, {\bf \widetilde W}^{d})$ be the $d$-dimensional triangular array given by, for $i=1, \ldots, d$,
\begin{align}\label{tilWdef}
{\bf \widetilde W}^{i}
= \bR{ \sum_{j=1}^{n_1} X_{j}^{1,(i,I_i)}, \ldots, \sum_{j=1}^{n_{i-1}} X_{j}^{i-1,(i,I_i)},
\sum_{j=1, j \neq I_i}^{n_i} X_{j}^{i,(i,I_i)}+1 }.
\end{align}
Then, ${\bf  \widetilde W}$ is a size-biased coupling of ${\bf W}$.
\end{lemma}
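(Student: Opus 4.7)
The plan is to verify the defining identity (\ref{h}) directly for each index $i=1,\ldots,d$ by conditioning. Fix $i$ and an integrable test function $f:\R^i\to\R$. Decompose
\[
\E\bR{W_i f({\bf W}^i)}=\sum_{\ell=1}^{n_i}\E\bR{X_\ell^i f({\bf W}^i)}.
\]
Since $X_\ell^i$ is a Bernoulli variable with parameter $p_{i,\ell}$, each summand equals $p_{i,\ell}\,\E\bR{f({\bf W}^i)\mid X_\ell^i=1}$. On the other side, using the independence of $I_i$ from the rest and definition (\ref{Idef}),
\[
\lambda_i\,\E\bR{f({\bf \widetilde W}^i)}=\lambda_i\sum_{\ell=1}^{n_i}\P(I_i=\ell)\,\E\bR{f({\bf \widetilde W}^i)\mid I_i=\ell}=\sum_{\ell=1}^{n_i}p_{i,\ell}\,\E\bR{f({\bf \widetilde W}^i)\mid I_i=\ell}.
\]
Thus it suffices to show, for every fixed $\ell$, the matching of conditional expectations
\[
\E\bR{f({\bf \widetilde W}^i)\mid I_i=\ell}=\E\bR{f({\bf W}^i)\mid X_\ell^i=1}.
\]

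To establish this, I would first use that on $\{I_i=\ell\}$, the definition (\ref{tilWdef}) reduces ${\bf \widetilde W}^i$ to the explicit functional
\[
\Bigl(\textstyle\sum_{j=1}^{n_1}X_j^{1,(i,\ell)},\;\ldots,\;\sum_{j=1}^{n_{i-1}}X_j^{i-1,(i,\ell)},\;\sum_{j\neq \ell}X_j^{i,(i,\ell)}+1\Bigr),
\]
so $\E(f({\bf \widetilde W}^i)\mid I_i=\ell)=\E(g({\bf X}^{i,(i,\ell)}))$ for the continuous function $g$ that reads off the relevant partial sums and adds $1$ to the last coordinate. Next, on the event $\{X_\ell^i=1\}$, the vector ${\bf W}^i$ is exactly
\[
\Bigl(\textstyle\sum_{j=1}^{n_1}X_j^{1},\;\ldots,\;\sum_{j=1}^{n_{i-1}}X_j^{i-1},\;\sum_{j\neq \ell}X_j^{i}+1\Bigr)=g({\bf X}^{i,\ell}),
\]
with the same $g$, because the missing $\ell$-th indicator contributes the deterministic $+1$. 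Hence
\[
\E\bR{f({\bf W}^i)\mid X_\ell^i=1}=\E\bR{g({\bf X}^{i,\ell})\mid X_\ell^i=1}=\E\bR{g({\bf X}^{i,(i,\ell)})},
\]
where the last step uses precisely the distributional identity (\ref{law}).

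Combining the two sides yields (\ref{h}), which is the definition of a multivariate size-biased coupling. The principal obstacle is essentially notational: one must track carefully that dropping the $\ell$-th coordinate and adding $1$ to the $i$-th partial sum in (\ref{tilWdef}) precisely mirrors what conditioning on $X_\ell^i=1$ does to ${\bf W}^i$, and that $I_i$ being independent of the arrays $\{{\bf X}^{i,(i,\ell)}\}_\ell$ allows the decomposition by the law of total expectation without picking up spurious dependence.
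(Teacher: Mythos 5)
Your proof is correct and is essentially the paper's argument: both decompose over the value of $I_i$ (equivalently over which indicator $X_\ell^i$ is ``picked''), use the independence of $I_i$ and the identity $W_i=\sum_{j\neq\ell}X_j^i+1$ on $\{X_\ell^i=1\}$, and invoke the coupling law \eqref{law} to match the two conditional expectations. The only cosmetic difference is that you verify \eqref{h} for a general test function $f$, whereas the paper verifies the equivalent pointwise identity \eqref{h2} by taking $f$ to be an indicator of a point ${\bf k}^i$.
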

\begin{proof}
It suffices to prove (\ref{h2}). For $i=1, \ldots, d$ and ${\bf k}^i=(k_1, \ldots, k_i) \in \mathbb{N}_0^i$, in view of \eqref{Idef} and \eqref{tilWdef},
we have, conditioning  with respect to all the  values of $I_i$ and  using independence together with the conditional probability law equality \eqref{law}, that
\begin{equation*} \begin{split} 
&\lambda_i\P({\bf \widetilde W}^{i}={\bf k}^i) 
=\lambda_i \sum_{j=1}^{n_i}  \P({\bf \widetilde W}^{i}={\bf k}^i, I_i=j) \\
&=\sum_{j=1}^{n_i} \P((W_1, \ldots, W_{i-1},W_i-X_{j}^i+1)={\bf k}^i \mid X^i_{j}=1) p_{i,j}\\
&=\sum_{j=1}^{n_i} \frac{p_{i,j}}{\P(X^i_{j}=1)}\P({\bf W}^i={\bf k}^i, X^i_{j}=1) 
=\sum_{j=1}^{n_i} \P({\bf W}^i={\bf k}^i, X^i_{j}=1)
=k_i \P({\bf W}^i={\bf k}^i),
\end{split}
\end{equation*}
recalling the definition of $p_{i,j}$ in \eqref{Idef}.
This concludes the desired proof.
\end{proof}

The following extends \cite[Corollary 4.15]{R} to the multivariate setting. 
\begin{corollary} \label{c}
For all $i=1, \ldots, d$, let $(X^i_1, \ldots, X^i_{n_i})$ be an $n_i$-dimensional random vector formed by exchangeable indicator random variables and consider 
an $n_1  \cdots n_{i-1} (n_i-1)$-dimensional random vector 
$${\bf X}^{i,(i,1)}
= \bR{
(X_{j}^{1,(i,1)})_{j=1}^{n_1}, \ldots, (X_{j}^{i-1,(i,1)})_{j=1}^{n_{i-1}},(X_{j}^{i,(i,1)})_{j=2}^{n_i}
}$$
defined on the same probability space as the random vector
$${\bf X}^{i,1}
= \bR{ (X_{j}^1)_{j=1}^{n_1}, \ldots, (X_{j}^{i-1})_{j=1}^{n_{i-1}},(X_{j}^{i})_{j=2}^{n_i} }
$$
satisfying
$$
\mathcal{L}\,({\bf X}^{i,(i,1)})=\mathcal{L}\,({\bf X}^{i,1}  \mid X^i_{1}=1).
$$
 Let ${\bf  \widetilde W}=({\bf \widetilde W}^{1}, \ldots, {\bf \widetilde W}^{d})$ be the $d$-dimensional triangular array given by, for $i=1, \ldots, d$,
$$
{\bf \widetilde W}^{i}
= \bR{ \sum_{j=1}^{n_1} X_{j}^{1,(i,1)}, \ldots, \sum_{j=1}^{n_{i-1}} X_{j}^{i-1,(i,1)},
\sum_{j=2}^{n_i} X_{j}^{i,(i,1)}+1 }.
$$
Then, ${\bf  \widetilde W}$ is a size-biased coupling of ${\bf W}$.
\end{corollary}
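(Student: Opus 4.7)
The plan is to derive Corollary \ref{c} by reduction to Lemma \ref{c1b}. First, I would observe that exchangeability of $(X^i_1,\ldots,X^i_{n_i})$ forces all marginal success probabilities $p_{i,j}=\P(X^i_j=1)$ to coincide with a common value $p_i$, so that $\lambda_i=n_i p_i$ and the random index $I_i$ appearing in Lemma \ref{c1b} becomes uniform on $\{1,\ldots,n_i\}$.

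The main step is to argue that the law of the random vector ${\bf \widetilde W}^{i}$ constructed in Lemma \ref{c1b} does not depend on the realized value of $I_i$. Indeed, each coordinate of ${\bf \widetilde W}^{i}$ is a sum that is symmetric in the indices of the $X^i_j$'s, and (reading the exchangeability jointly with the earlier blocks) permuting those indices leaves the joint law of $(X^1,\ldots,X^{i-1},X^i)$ invariant. Consequently one may replace the random $I_i$ by the deterministic value $\ell=1$, which recovers exactly the construction of Corollary \ref{c}, and the claim follows from Lemma \ref{c1b}.

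As a direct alternative that bypasses Lemma \ref{c1b}, one could verify identity \eqref{h2} by hand: using the conditional law of ${\bf X}^{i,(i,1)}$, one reduces $\lambda_i\,\P({\bf \widetilde W}^{i}={\bf k}^i)$ to $n_i\,\P({\bf W}^i={\bf k}^i,X^i_1=1)$, and then turns this into $\sum_{\ell=1}^{n_i}\P({\bf W}^i={\bf k}^i,X^i_\ell=1)=\E[W_i\,\1\{{\bf W}^i={\bf k}^i\}]=k_i\,\P({\bf W}^i={\bf k}^i)$ via exchangeability.

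The only delicate point here is interpretational rather than computational: the exchangeability hypothesis must be understood jointly, meaning that for each $i$ the law of $(X^1,\ldots,X^{i-1},X^i_1,\ldots,X^i_{n_i})$ is invariant under permutations of the $i$-th block. In applications such as subgraph counts this joint symmetry is automatic, so with this reading the corollary is essentially immediate from Lemma \ref{c1b}.
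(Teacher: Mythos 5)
Your proposal is correct and follows the same route as the paper, whose proof is the one-line observation that exchangeability makes the $I_i$ uniform so that Lemma \ref{c1b} applies with the single coupling at $\ell=1$. Your additional caveat that exchangeability must be read jointly across the blocks (so that the law of ${\bf \widetilde W}^{i}$ is indeed independent of the realized index) is a legitimate point the paper leaves implicit, and it holds in the paper's applications.
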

\begin{proof}
The proof follows from Lemma \ref{c1b} and the fact that exchangeability implies that the random variables $I_i$'s are uniformly distributed.
\end{proof}

\subsection{Multivariate Poisson approximation}

Recall that the Wasserstein distance between random vectors $\mathbf{X}$ and $\mathbf{P}$
is defined by
$$
d_{\mathrm{W}}(\mathbf{X}, \mathbf{P})=\sup _{g \in \operatorname{Lip}^d(1)}
|\mathbb{E}( g(\mathbf{X}) ) - \mathbb{E}( g(\mathbf{P}) )|,
$$
where $\operatorname{Lip}^d(1)$ denotes the set of Lipschitz functions $g: \mathbb{N}_0^d \rightarrow \mathbb{R}$ with Lipschitz constant bounded by 1 with respect to the 1-norm $|\mathbf{x}|_1=\sum_{i=1}^d\left|x_i\right|$, 
for a vector $\mathbf{x}=\left(x_1, \ldots, x_d\right) \in \mathbb{R}^d$.
Since the indicator functions defined on $\mathbb{N}_0^d$ are Lipschitz, 
for random vectors in $\mathbb{N}_0^d$, 
the Wasserstein distance dominates the total variation distance,
and therefore, all bounds henceforth are also valid for total variation distance.

Our first result is the following Poisson approximation
using multivariate size-biased coupling, which is a consequence of the result by Pianoforte and Turin \cite{PT}.

\begin{theorem} \label{t1}
Let ${\bf W}=(W_1, \ldots, W_d)$ be a $d$-dimensional random vector taking values in $\mathbb{N}^d_0$, with $\E(W_i)=\lambda_i>0$ for $i=1, \ldots, d$
and let ${\bf  \widetilde W}$ be a size-biased coupling of ${\bf W}$. Then,
$$
d_{\mathrm{W}}({\bf W}, {\bf  P_{\bm\lambda}}) \leq \sum_{i=1}^d \min\{1,\lambda_i\} 
\E \vert \widetilde W_i^{i}-1-W_i\vert
+2 \sum_{i=2}^d  \lambda_i \sum_{j=1}^{i-1} \E\vert \widetilde W_j^{i}-W_j \vert.
$$
\end{theorem}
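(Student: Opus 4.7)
The plan is to derive the bound as a direct application of the multivariate Stein-type inequality obtained by Pianoforte and Turin \cite{PT}, combined with the defining identity of multivariate size-biased coupling. Fix a test function $g \in \operatorname{Lip}^d(1)$. The Pianoforte--Turin machinery iterates the univariate Poisson Stein equation along the coordinates $i = 1, \ldots, d$ and reduces $|\mathbb{E} g(\mathbf{W}) - \mathbb{E} g(\mathbf{P}_{\bm\lambda})|$ to controlling a sum over $i$ of terms of the form $\mathbb{E}[\lambda_i f_i(\mathbf{W}^{i-1}, W_i + 1) - W_i f_i(\mathbf{W}^i)]$, where $f_i : \mathbb{N}_0^i \to \mathbb{R}$ is the Stein solution attached to the conditional expectation of $g$ against the independent Poisson tail $(P_{i+1}, \ldots, P_d)$.

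The first key step is then to invoke the multivariate size-biased identity \eqref{h}, namely $\mathbb{E}[W_i f_i(\mathbf{W}^i)] = \lambda_i \mathbb{E}[f_i(\widetilde{\mathbf{W}}^i)]$, which rewrites each of these terms as $\lambda_i \mathbb{E}[f_i(\mathbf{W}^{i-1}, W_i + 1) - f_i(\widetilde{\mathbf{W}}^i)]$. I would then split this difference by the coordinate-by-coordinate swap
$$
f_i(\mathbf{W}^{i-1}, W_i + 1) - f_i(\widetilde{\mathbf{W}}^i)
= \bigl[f_i(\mathbf{W}^{i-1}, W_i + 1) - f_i(\mathbf{W}^{i-1}, \widetilde{W}_i^i)\bigr]
+ \bigl[f_i(\mathbf{W}^{i-1}, \widetilde{W}_i^i) - f_i(\widetilde{\mathbf{W}}^i)\bigr],
$$
and handle the two brackets using the Stein-factor estimates for $f_i$ coming from \cite{PT}. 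The forward-difference in the $i$-th coordinate is controlled by the classical Poisson Stein factor $\min\{1, 1/\lambda_i\}$, which after absorbing one $\lambda_i$ yields the contribution $\min\{1, \lambda_i\} \mathbb{E}|\widetilde W_i^i - 1 - W_i|$. The Lipschitz constant of $f_i$ in each of the first $i-1$ coordinates is at most $2\|g\|_{\operatorname{Lip}} = 2$, so a further triangle inequality over $j = 1, \ldots, i-1$ bounds the second bracket by $2 \sum_{j=1}^{i-1} |\widetilde W_j^i - W_j|$, giving the contribution $2\lambda_i \sum_{j=1}^{i-1} \mathbb{E}|\widetilde W_j^i - W_j|$.

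The main technical point is a bookkeeping one, namely extracting from \cite{PT} the precise Stein factors for the iterated Poisson solution $f_i$: the sharp $\min\{1, 1/\lambda_i\}$ bound on $\|\Delta_i f_i\|_\infty$, and the Lipschitz constant $2$ in each of the coordinates $1, \ldots, i-1$. Both are inherited from the product-Poisson structure used to construct $f_i$ and are built into the Pianoforte--Turin framework. Once they are in place, summing the above contributions over $i = 1, \ldots, d$ and taking the supremum over $g \in \operatorname{Lip}^d(1)$ delivers the stated inequality; note that for $i = 1$ the inner sum is empty, so the bound reduces to the univariate estimate $\min\{1, \lambda_1\} \mathbb{E}|\widetilde W_1^1 - 1 - W_1|$ of \cite[Theorem 4.13]{R}.
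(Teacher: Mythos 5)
Your proposal is correct and follows essentially the same route as the paper: the paper applies \cite[Theorem 1.1]{PT} as a black box with $Z_i^{(i)}=\widetilde W_i^{i}-1-W_i$, $Z_j^{(i)}=\widetilde W_j^{i}-W_j$, and ${\bf Y}^{(i)}={\bf \widetilde W}^{i}$, noting that the exact size-bias relation \eqref{h2} makes the residual terms $q^{(i)}_{m_{1:i}}$ vanish --- which is precisely your step of substituting \eqref{h} into the iterated Stein equation --- and then sharpens $\lambda_i$ to $\min\{1,\lambda_i\}$ via the standard Stein factor, exactly as you do. Your write-up simply unpacks the internals of the Pianoforte--Turin argument (the coordinate-by-coordinate swap and the Stein-factor bookkeeping) rather than citing it wholesale, but the substance is identical.
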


\begin{proof}
The proof is a direct application of \cite[Theorem 1.1]{PT} 
by taking, for $i=1, \ldots, d$, $Z_i^{(i)}=\widetilde W_i^{i}-1-W_i$,
$Z_j^{(i)}=\widetilde W_j^{i}-W_j$ for $j=1, \ldots, i-1$, and ${\bf Y}^{(i)}={\bf \widetilde W}^{i}$. 
Then, we observe that (\ref{h2}) implies that all the $q^{(i)}_{m_{1:i}}$'s in Theorem 1.1 in \cite{PT} are zero, 
since in our notation, we have that for all $i=1, \ldots, d$ 
and ${\bf k}^i=(k_1, \ldots, k_i) \in \mathbb{N}_0^i$ with $k_i \neq 0$,
$$
q^{(i)}_{m_{1:i}}=k_i \P({\bf W}^i={\bf k}^i)-\lambda_i\P({\bf \widetilde W}^{i}={\bf k}^i),
$$
which concludes the proof.
\end{proof}

\begin{remark}
We observe that \cite[Theorem 1.1]{PT} is stated with $\min\{1,\lambda_i\}$ replaced by $\lambda_i$. However, when going through the proof, 
we see that we can use a sharper bound 
given for e.g. in \cite[Lemma 4.4]{R}, which allows to replace the $\lambda_i$ by $\min\{1,\lambda_i\}$.
\end{remark}

When the multi-dimensional size-biased coupling
is increasing, we extend \cite[Theorem 4.20]{R}.
\begin{theorem} \label{i1}
Under the assumptions of Lemma \ref{c1b}, suppose that for all $i=1, \ldots, d$, $j=1, \ldots, i-1$, $\ell=1, \ldots, n_i$, and $k=1, \ldots, n_j$, we have  
$X_{k}^{j,(i,\ell)}\geq  X_{k}^{j}$, 
and for $j=i$ and $k \neq \ell$ we have $X_{k}^{i,(i,\ell)}\geq  X_{k}^{j}$ . Then, 
$$
d_{\mathrm{W}}({\bf W}, {\bf  P_{\bm\lambda}}) \leq\sum_{i=1}^d \min\{1,\lambda_i^{-1}\} \left(\textnormal{Var}(W_i)-\lambda_i+2\sum_{j=1}^{n_i} p_{i,j}^2\right)
+2 \sum_{i=2}^d  \lambda_i^{-1} \sum_{j=1}^{i-1} \textnormal{Cov}( W_i,W_j).
$$
\end{theorem}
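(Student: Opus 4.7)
The plan is to apply Theorem \ref{t1} to the size-biased coupling from Lemma \ref{c1b} and estimate each of its two types of $L^1$ terms using the monotonicity hypothesis combined with the defining identity \eqref{h}. The off-diagonal and diagonal contributions can be handled separately, and in both cases the monotonicity assumption lets us drop absolute values, after which the computation reduces to first- and second-moment identities for ${\bf \widetilde W}^i$.

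For the off-diagonal pieces with $j<i$, the hypothesis $X_k^{j,(i,\ell)}\geq X_k^j$ immediately gives $\widetilde W_j^i=\sum_k X_k^{j,(i,I_i)}\geq W_j$ almost surely, so $\E|\widetilde W_j^i-W_j|=\E(\widetilde W_j^i-W_j)$. Taking $f({\bf W}^i)=W_j$ in \eqref{h} yields $\E(\widetilde W_j^i)=\E(W_iW_j)/\lambda_i$, hence
\[
\E(\widetilde W_j^i-W_j)=\frac{\E(W_iW_j)-\lambda_i\lambda_j}{\lambda_i}=\frac{\Cov(W_i,W_j)}{\lambda_i},
\]
which substituted into the second sum of Theorem \ref{t1} produces the covariance contribution. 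For the diagonal pieces the $-1$ in \eqref{tilWdef} needs care. Using $X_k^{i,(i,\ell)}\geq X_k^i$ for $k\neq \ell$ and the explicit form \eqref{tilWdef},
\[
\widetilde W_i^i-1=\sum_{k\neq I_i}X_k^{i,(i,I_i)}\geq \sum_{k\neq I_i}X_k^i=W_i-X_{I_i}^i,
\]
so $A:=\widetilde W_i^i-1-W_i+X_{I_i}^i\geq 0$. Writing $\widetilde W_i^i-1-W_i=A-X_{I_i}^i$ as a difference of nonnegative quantities and using the triangle inequality,
\[
\E|\widetilde W_i^i-1-W_i|\leq \E A+\E X_{I_i}^i=\E(\widetilde W_i^i-1-W_i)+2\,\E X_{I_i}^i.
\]
Plugging $f=W_i$ into \eqref{h} gives $\E\widetilde W_i^i=\E(W_i^2)/\lambda_i$, whence $\E(\widetilde W_i^i-1-W_i)=\Var(W_i)/\lambda_i-1$; and conditioning on $I_i$ with \eqref{Idef} yields $\E X_{I_i}^i=\sum_j p_{i,j}^2/\lambda_i$. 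Combining produces
\[
\E|\widetilde W_i^i-1-W_i|\leq \lambda_i^{-1}\!\left(\Var(W_i)-\lambda_i+2\sum_{j=1}^{n_i} p_{i,j}^2\right),
\]
and multiplying by the $\min\{1,\lambda_i\}$ prefactor from Theorem \ref{t1} gives $\min\{1,\lambda_i^{-1}\}$ times the bracketed quantity.

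The real content is the inequality $A\geq 0$, which is exactly where the increasing hypothesis enters; everything else is a mechanical moment computation driven by \eqref{h}. The one minor bookkeeping step is noticing $\min\{1,\lambda_i\}\lambda_i^{-1}=\min\{1,\lambda_i^{-1}\}$, which is automatic once the bracketed quantity is seen to be nonnegative (this is a consequence of $A\geq 0$ together with $X_{I_i}^i\in\{0,1\}$). I do not anticipate any genuine obstacle; the most delicate point is simply organizing the inequality $\widetilde W_i^i-1-W_i\geq -X_{I_i}^i$ so that the triangle-inequality bound on $\E|\cdot|$ collapses to expectations computable via the size-bias identity.
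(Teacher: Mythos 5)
Your proof is correct and follows essentially the same route as the paper's: apply Theorem \ref{t1}, use the increasing hypothesis to drop the absolute values (with the decomposition $\widetilde W_i^i-1-W_i=A-X_{I_i}^i$, $A\geq 0$, handling the diagonal term at the cost of $2\,\E X_{I_i}^i$), and then convert the resulting expectations into $\Var(W_i)$, $\Cov(W_i,W_j)$, and $\sum_j p_{i,j}^2$ via the size-bias identity \eqref{h} and conditioning on $I_i$. Your write-up is in fact a bit more careful than the paper's about why the triangle inequality yields exactly the $+2X_{I_i}^i$ correction; the only superfluous remark is that $\min\{1,\lambda_i\}\lambda_i^{-1}=\min\{1,\lambda_i^{-1}\}$ is an unconditional identity and does not require nonnegativity of the bracketed quantity.
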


\begin{proof}
Applying Theorem \ref{t1}, we have, by using \eqref{Idef}, that
\begin{equation*}
\begin{split}
&d_{\mathrm{W}}({\bf W}, {\bf  P_{\bm\lambda}}) \\
& \leq \sum_{i=1}^d \min\{1,\lambda_i^{-1}\} \sum_{j=1}^{n_i} p_{i,j} \E\left(\sum_{k=1, k \neq j}^{n_i}(X_{k}^{i,(i,j)}-
X_{k}^{i})+X_j^i\right) +
2 \sum_{i=2}^d  \lambda_i \sum_{j=1}^{i-1} \E\left( \widetilde W_j^{i}-W_j \right) \\
&=\sum_{i=1}^d \min\{1,\lambda_i \}\,\E \left( \widetilde W_i^{i}-W_i-1+2X_{I_i}^i\right) +2 \sum_{i=2}^d  \lambda_i \sum_{j=1}^{i-1} \E\left( \widetilde W_j^{i}-W_j \right).
\end{split}
\end{equation*}
Now, using (\ref{h}) and the fact that $\lambda_i=\E(W_i)$, we  get  that 
\[
\lambda_i\E ( \widetilde W_i^{i}-W_i)=\E(W_i^2)-(\E(W_i))^2=\textnormal{Var}(W_i),
\]
and similarly, 
\[
\lambda_i  \E( \widetilde W_j^{i}-W_j)=\E(W_i W_j)-\E(W_i) \E(W_j)=\textnormal{Cov}( W_i,W_j).
\]
Finally, using \eqref{Idef} to rewrite $\lambda_i$, we have
\[
\lambda_i \E(X_{I_i}^i)
= \lambda_i\sum_{j=1}^{n_i} \P(I_i = j)\E(X_{I_i}^i \mid I_i = j)
= \sum_{j=1}^{n_i} p_{i,j} \E(X_{I_i}^i \mid I_i = j)
= \sum_{j=1}^{n_i} p_{i,j}^2,
\]
where we use the definition of $p_{i,j}$.
This completes the desired  bound.
\end{proof}

Next, we extend \cite[Theorem 4.31]{R}
for the decreasing size-biased coupling.
\begin{theorem} \label{dd}
Under the assumptions of Lemma \ref{c1b}, suppose that for all $i=1, \ldots, d$, $j=1, \ldots, i-1$, $\ell=1, \ldots, n_i$, and $k=1, \ldots, n_j$, we have  
$X_{k}^{j,(i,\ell)}\leq  X_{k}^{j}$ and for $j=i$ and $k \neq \ell$ we have $X_{k}^{i,(i,\ell)}\leq  X_{k}^{j}$. Then, 
$$
d_{\mathrm{W}}({\bf W}, {\bf  P_{\bm\lambda}}) \leq \sum_{i=1}^d \min\{1,\lambda_i^{-1}\} \left(\lambda_i-\textnormal{Var}(W_i)\right)
-2 \sum_{i=2}^d  \lambda_i^{-1} \sum_{j=1}^{i-1} \textnormal{Cov}( W_i,W_j).
$$
\end{theorem}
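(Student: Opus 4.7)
The plan is to apply Theorem \ref{t1} and then use the decreasing coupling assumption to collapse every absolute value to a signed linear expression that can be evaluated via the size-biased identity (\ref{h}). This parallels the proof of Theorem \ref{i1}, but with signs flipped and a genuine simplification in the $j=i$ term.

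First I would invoke Theorem \ref{t1} to obtain the raw bound
$d_{\mathrm{W}}({\bf W}, {\bf P_{\bm\lambda}}) \leq \sum_{i=1}^d \min\{1,\lambda_i\}\,\E|\widetilde W_i^{i} - 1 - W_i| + 2\sum_{i=2}^d \lambda_i \sum_{j=1}^{i-1} \E|\widetilde W_j^{i} - W_j|$. Using the explicit form of the coupling in (\ref{tilWdef}) and conditioning on $I_i = \ell$, I write $\widetilde W_j^{i} - W_j = \sum_{k=1}^{n_j}(X_k^{j,(i,\ell)} - X_k^j)$ for $j < i$, which is non-positive by the decreasing hypothesis $X_k^{j,(i,\ell)} \leq X_k^j$. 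Similarly, $\widetilde W_i^{i} - 1 - W_i = \sum_{k \neq \ell}(X_k^{i,(i,\ell)} - X_k^i) - X_\ell^i$, where both the sum (by the decreasing hypothesis on $k \neq \ell$) and the trailing term $-X_\ell^i$ are non-positive. This is the key structural point over Theorem \ref{i1}: in the increasing case the $-X_{I_i}^i$ term competed against a non-negative sum, producing the $2\sum_j p_{i,j}^2$ correction; here it reinforces the sign, so no such correction appears.

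The absolute values can therefore be removed by sign-flipping: $|\widetilde W_i^{i} - 1 - W_i| = W_i + 1 - \widetilde W_i^{i}$ and $|\widetilde W_j^{i} - W_j| = W_j - \widetilde W_j^{i}$. Applying (\ref{h}) with the test functions $f(x_1,\ldots,x_i) = x_i$ and $f(x_1,\ldots,x_i) = x_j$ gives $\lambda_i \E(\widetilde W_i^{i}) = \E(W_i^2)$ and $\lambda_i \E(\widetilde W_j^{i}) = \E(W_i W_j)$, so that
$\lambda_i\,\E(W_i + 1 - \widetilde W_i^{i}) = \lambda_i^2 + \lambda_i - \E(W_i^2) = \lambda_i - \Var(W_i)$
and
$\lambda_i\,\E(W_j - \widetilde W_j^{i}) = \lambda_i\lambda_j - \E(W_i W_j) = -\Cov(W_i, W_j)$.

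Substituting these identities back into the bound from Theorem \ref{t1} and using $\min\{1,\lambda_i\}/\lambda_i = \min\{1,\lambda_i^{-1}\}$ yields the claimed inequality after the same bookkeeping as in the proof of Theorem \ref{i1}. There is no real obstacle: the only conceptually new ingredient is the one-line sign analysis showing $\widetilde W_i^{i} - 1 - W_i \leq 0$ almost surely, which is what makes the bound tighter (no $p_{i,j}^2$ term) than its increasing counterpart.
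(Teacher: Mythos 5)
Your proposal is correct and takes essentially the same route as the paper's proof: apply Theorem \ref{t1}, use the decreasing hypothesis to remove the absolute values outright (including the key observation that $-X_{I_i}^i$ now has the same sign as the rest of the terms, so no $2\sum_j p_{i,j}^2$ correction appears), and convert the resulting signed expectations into $\lambda_i-\Var(W_i)$ and $-\Cov(W_i,W_j)$ via the size-bias identity (\ref{h}). The only caveat --- present in the paper's own proof as well, not introduced by you --- is that carrying the prefactor $2\lambda_i$ from Theorem \ref{t1} through $\lambda_i\,\E(W_j-\widetilde W_j^{i})=-\Cov(W_i,W_j)$ literally yields $-2\sum_{i=2}^d\sum_{j=1}^{i-1}\Cov(W_i,W_j)$ rather than the stated $-2\sum_{i=2}^d\lambda_i^{-1}\sum_{j=1}^{i-1}\Cov(W_i,W_j)$, so the ``same bookkeeping as in Theorem \ref{i1}'' step inherits that factor discrepancy from the statements rather than resolving it.
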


\begin{proof}
Applying Theorem \ref{t1}, we have that
\begin{equation*}
\begin{split}
&d_{\mathrm{W}}({\bf W}, {\bf  P_{\bm\lambda}})\\
&\leq \sum_{i=1}^d \min\{1,\lambda_i^{-1}\}\sum_{j=1}^{n_i} p_{i,j} \E\left(\sum_{k=2}^{n_i}(X_{k}^{i}-
X_{k}^{i,(i,j)})+X_j^i\right)
+2 \sum_{i=2}^d  \lambda_i \sum_{j=1}^{i-1} \E\left( W_j- \widetilde W_j^{i} \right) \\
&=\sum_{i=1}^d \min\{1,\lambda_i\} \E \left(W_i- \widetilde W_i^{i}+1\right)
+2 \sum_{i=2}^d  \lambda_i \sum_{j=1}^{i-1} \E\left( W_j- \widetilde W_j^{i}\right).
\end{split}
\end{equation*}
Finally, proceedings as in the proof of Theorem \ref{i1}, we obtain the desired bound.
\end{proof}

\section{Applications}

\subsection{Joint subgraph counts in random  graphs}

Recall that $\mathcal G(n, p)$ denotes  the Erd\H{o}s-Rényi random graph  on $n$ vertices with edge
probability $p=p(n) \in (0, 1)$.
Let $(H_1, \ldots, H_d)$ be a sequence of distinct fixed graphs 
such that
for $i=1, \ldots, d$,
each graph $H_i$
has $0 < v_{H_i} \leq n$ vertices, $e_{H_i}>0$ edges, and no isolated vertices. 
We analyze the joint distribution of subgraph counts of  $(H_1, \ldots, H_d)$ in $\mathcal G(n, p)$, 
that is, the  number of copies of $H_i$ in $\mathcal G(n, p)$ for $i=1, \ldots, d$.

Our main result in this section is the multivariate Poisson approximation of 
subgraph counts, as a consequence of the decreasing size-biased coupling in Theorem \ref{i1}, 
as an extension of \cite[Theorem 4.21]{R}. 
Observe that in this application of Theorem \ref{i1} the  $p_{i,j}$'s only depend on $i$.
\begin{theorem} \label{t4}
Let $( H_1, \ldots, H_d )$ be a sequence of $d$ distinct graphs with $0 < v_{H_i} \leq n$ vertices, 
$e_{H_i}>0$ edges, and no isolated vertices. Let ${\bf W}=(W_1, \ldots, W_d)$,
where $W_i$ is the number of copies of
$H_i$ in $\mathcal G(n, p)$ for $i=1, \ldots, d$, and let $\lambda_i=\E(W_i)$.
Then,
$$
d_{\mathrm{W}}({\bf W}, {\bf  P_{\bm\lambda}}) \leq\sum_{i=1}^d \min\{1,\lambda_i^{-1}\} \left(\textnormal{Var}(W_i)-\lambda_i+2 \lambda_i p^{e_{H_i}}\right)
+2 \sum_{i=2}^d  \lambda_i^{-1} \sum_{j=1}^{i-1} \textnormal{Cov}( W_i,W_j).
$$
\end{theorem}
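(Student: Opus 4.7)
The plan is to apply Theorem~\ref{i1} with the subgraph-count indicators. Enumerate the $n_i$ potential copies of $H_i$ inside the complete graph $K_n$, and let $X^i_j$ be the indicator that all $e_{H_i}$ edges of the $j$-th such potential copy are present in $\mathcal{G}(n,p)$. Then $W_i = \sum_{j=1}^{n_i} X^i_j$ and $p_{i,j} := \P(X^i_j=1) = p^{e_{H_i}}$ for every $j$. Since these probabilities depend only on $i$, one has $\sum_{j=1}^{n_i} p_{i,j}^2 = n_i\, p^{2 e_{H_i}} = \lambda_i\, p^{e_{H_i}}$, using $\lambda_i = n_i p^{e_{H_i}}$; this already accounts for the $2\lambda_i p^{e_{H_i}}$ term appearing in the claimed bound.

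Next I construct the coupling required by Lemma~\ref{c1b} monotonically. On the same probability space as $\mathcal{G}(n,p)$, let $\mathcal{G}^{i,\ell}$ denote the random graph obtained from $\mathcal{G}(n,p)$ by forcibly inserting every edge of the $\ell$-th potential copy of $H_i$, while leaving the remaining edges unchanged. Because $\mathcal{G}(n,p)$ has independent edge indicators, conditioning on $\{X^i_\ell = 1\}$ amounts to fixing the edges of copy $\ell$ to be present and leaving every other edge an independent $\mathrm{Bernoulli}(p)$; this matches the law of $\mathcal{G}^{i,\ell}$ exactly. Setting $X^{j,(i,\ell)}_k$ equal to the indicator that the $k$-th copy of $H_j$ lies in $\mathcal{G}^{i,\ell}$ therefore fulfils the distributional requirement \eqref{law} of Lemma~\ref{c1b}. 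Since $\mathcal{G}^{i,\ell} \supseteq \mathcal{G}(n,p)$, every subgraph-presence indicator can only grow, so $X^{j,(i,\ell)}_k \geq X^j_k$ for every admissible $(j,k)\neq (i,\ell)$, which is exactly the monotonicity hypothesis of Theorem~\ref{i1}.

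With the coupling in place, I simply invoke Theorem~\ref{i1} and substitute $\sum_{j=1}^{n_i} p_{i,j}^2 = \lambda_i p^{e_{H_i}}$ into its right-hand side, obtaining
\[
d_{\mathrm{W}}({\bf W}, {\bf P}_{\bm\lambda}) \leq \sum_{i=1}^d \min\{1,\lambda_i^{-1}\}\bigl(\Var(W_i) - \lambda_i + 2\lambda_i p^{e_{H_i}}\bigr) + 2\sum_{i=2}^d \lambda_i^{-1}\sum_{j=1}^{i-1}\Cov(W_i, W_j),
\]
which is the claimed bound.

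The step that requires attention is not computational but combinatorial: one must verify that enlarging $\mathcal{G}(n,p)$ by the edges of a prescribed copy $\ell$ genuinely realises the conditional law $\mathcal{L}(\mathcal{G}(n,p)\mid X^i_\ell=1)$ even though the edges of copy $\ell$ may overlap with edges of other potential copies indexing $X^j_k$. This reduces to the product structure of the edge measure, but it is the step where the bookkeeping between the different coordinates of the triangular array $\widetilde{\bf W}$ has to be done carefully; once it is in hand, the rest of the argument is pure substitution.
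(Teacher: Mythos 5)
Your proposal is correct and follows essentially the same route as the paper: index the indicators by copies of $H_i$ in $K_n$, couple by forcibly inserting the edges of the selected copy (which realises the conditional law by edge-independence and is monotone increasing), and substitute $\sum_j p_{i,j}^2 = \lambda_i p^{e_{H_i}}$ into Theorem~\ref{i1}. The only cosmetic difference is that the paper passes through Corollary~\ref{c} via exchangeability of the indicators, whereas you invoke Lemma~\ref{c1b} directly; both are valid.
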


To obtain more explicit error bounds,
we introduce some notation to analyse the covariances between subgraph counts.
Let $H_i$ and $H_j$ be two graphs that can be isomorphic. 
For any fixed copies $\alpha_i \in \Gamma_i$ and $\beta_j \in \Gamma_j$ in the complete graph $K_n$, we denote by $e_{\alpha_i,\beta_j}$ the number of edges shared between $\alpha_i$ and $\beta_j$, and by $v_{\alpha_i,\beta_j}$ the number of vertices incident to these shared edges. Define
\[
M_{i,j} = \max\{e_{\alpha_i,\beta_j} : \alpha_i \in \Gamma_i, \beta_j \in \Gamma_j\},
\]
which represents the maximum possible number of shared edges between any copy of $H_i$ and $H_j$.
For each $k \in \{1,\ldots,M_{i,j}\}$, define 
\[
\ell_{k,i,j} = \begin{cases}
\min\{v_{\alpha_i,\beta_j} : e_{\alpha_i,\beta_j} = k, \alpha_i \in \Gamma_i, \beta_j \in \Gamma_j\} & \text{if } \mathcal{S}_k(i, j) \neq \emptyset,\\
0 & \text{if } \mathcal{S}_k(i, j) = \emptyset,
\end{cases}
\]
where 
$
\mathcal{S}_k(i, j) = \{(\alpha_i,\beta_j) \in \Gamma_i \times \Gamma_j : e_{\alpha_i,\beta_j} = k\}
$
is the set of pairs sharing exactly $k$ edges.  
When $\ell_{k,i,j} > 0$, it represents the minimum number of vertices needed to realize $k$ shared edges between copies of $H_i$ and $H_j$.
Note that
$
\binom{\ell_{k,i,j}}{2} \le k \le 2 \ell_{k,i,j},
$
and $\ell_{k,i,j} = 0$ when it is impossible
for $H_i$ and $H_j$ to share exactly $k$ edges.
Let 
\begin{align*}
\mathcal{K}_{i,j} = \{k \in \{1,\ldots,M_{i,j}\} : \ell_{k,i,j} > 0\}
\end{align*}
be the set of possible shared edge counts between $H_i$ and $H_j$.
Define
\begin{align*}
\gamma_i(t) 
= \min_{k \in \mathcal{K}_{i,i}} \left\{ \ell_{k,i,i} - \dfrac{k}{t} \right\}.
\end{align*}

As a consequence of Theorem \ref{t4}, we obtain an explicit error bound in the case that $p \rightarrow 0$ as $n \rightarrow \infty$ using the above notation.
\begin{corollary}\label{t5}
Under the same conditions as in Theorem \ref{t4}, assume that $p \to 0$ as $n \to \infty$. Then, there exists a constant $C > 0$ such that for $n$ sufficiently large, we have
\begin{align*}
d_{\mathrm{W}}(\mathbf{W}, \mathbf{P}_\lambda) 
\le C \left\{\sum_{i=1}^d \min\{1,\lambda_i\} \left(p^{e_{H_i}} + n^{v_{H_i}-\gamma_i}p^{e_{H_i}}\right) 
+ \sum_{i=2}^d \sum_{j=1}^{i-1} \lambda_j \sum_{k \in \mathcal{K}_{i,j}} p^{-k}n^{-\ell_{k,i,j}} \right\},
\end{align*}
where for $i=1,\ldots,d$,
\[
\gamma_i 
= \gamma_i(d_{H_i})
= \min\left\{v_{H'_i} -  \dfrac{e_{H'_i}}{d_{H_i}} 
: H'_i \subsetneq H_i, e_{H'_i} > 0\right\},
\]
and $d_{H_i}=e_{H_i}/v_{H_i}$
is the density of the graph $H_i$.
\end{corollary}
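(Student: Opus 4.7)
The plan is to apply Theorem~\ref{t4} and then bound its variance and covariance terms by explicit combinatorial estimates. Writing $W_i = \sum_{\alpha \in \Gamma_i} X_\alpha$ with $\Gamma_i$ the set of copies of $H_i$ in $K_n$ and $X_\alpha$ the indicator that all $e_{H_i}$ edges of $\alpha$ are present in $\mathcal{G}(n,p)$, one has $|\Gamma_i| = \Theta(n^{v_{H_i}})$, $\E(X_\alpha)=p^{e_{H_i}}$, and $\lambda_i = \Theta(n^{v_{H_i}} p^{e_{H_i}})$.

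For the diagonal term in the bound of Theorem~\ref{t4}, the identity $\sum_\alpha \Var(X_\alpha) = \lambda_i(1-p^{e_{H_i}})$ gives
\[
\Var(W_i) - \lambda_i + 2\lambda_i p^{e_{H_i}} = \lambda_i p^{e_{H_i}} + \sum_{\alpha \ne \beta} \Cov(X_\alpha, X_\beta).
\]
Multiplying by $\min\{1,\lambda_i^{-1}\}$ and using $\min\{1,\lambda_i^{-1}\}\lambda_i = \min\{1,\lambda_i\}$ converts the first summand into the $\min\{1,\lambda_i\}\, p^{e_{H_i}}$ piece of the target bound. For the second summand I would partition the pairs $(\alpha,\beta)\in\Gamma_i^{2}$ with $\alpha\ne\beta$ by $k = e_{\alpha,\beta}\in\mathcal{K}_{i,i}$: by definition of $\ell_{k,i,i}$ each such pair spans at most $2v_{H_i} - \ell_{k,i,i}$ vertices of $K_n$, so the number of pairs is $O(n^{2v_{H_i}-\ell_{k,i,i}})$, and each covariance is at most $p^{2e_{H_i}-k}$. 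This yields $\min\{1,\lambda_i^{-1}\}\sum_{\alpha\ne\beta}\Cov(X_\alpha,X_\beta) \le C \min\{1,\lambda_i\}\sum_{k\in\mathcal{K}_{i,i}} n^{v_{H_i}-\ell_{k,i,i}} p^{e_{H_i}-k}$, and the equivalence $\gamma_i = \min_{k\in\mathcal{K}_{i,i}}\{\ell_{k,i,i}-k/d_{H_i}\} = \min_{H'_i\subsetneq H_i,\, e_{H'_i}>0}\{v_{H'_i}-e_{H'_i}/d_{H_i}\}$, coming from the fact that the minimising $H'_i$ with $e_{H'_i}=k$ realises $\ell_{k,i,i}=v_{H'_i}$, collapses the finitely many $k$-terms into $\min\{1,\lambda_i\}\, n^{v_{H_i}-\gamma_i} p^{e_{H_i}}$, delivering the first block.

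For the cross term with $i\ne j$, the same partition of $(\alpha,\beta)\in\Gamma_i\times\Gamma_j$ by $k=e_{\alpha,\beta}\in\mathcal{K}_{i,j}$ gives
\[
\Cov(W_i,W_j) \le C\sum_{k\in\mathcal{K}_{i,j}} n^{v_{H_i}+v_{H_j}-\ell_{k,i,j}}\, p^{e_{H_i}+e_{H_j}-k},
\]
and multiplication by $\lambda_i^{-1} = \Theta(n^{-v_{H_i}}p^{-e_{H_i}})$ together with $n^{v_{H_j}} p^{e_{H_j}} = \Theta(\lambda_j)$ rewrites each summand as $C\lambda_j\, p^{-k} n^{-\ell_{k,i,j}}$, which is the second block of the claimed bound. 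The main obstacle I anticipate is the combinatorial accounting of pairs of copies sharing exactly $k$ edges: the $O(n^{2v_{H_i}-\ell_{k,i,i}})$ and $O(n^{v_{H_i}+v_{H_j}-\ell_{k,i,j}})$ estimates rest on the observation that fixing $k$ shared edges constrains at least $\ell_{k,i,j}$ of the incident vertices, and reconciling the minimising $k$ in the diagonal case with the extremal subgraph $H'_i\subsetneq H_i$ that defines $\gamma_i$ is the only substantive bookkeeping step; the rest is arithmetic with the estimate $\lambda_i = \Theta(n^{v_{H_i}} p^{e_{H_i}})$.
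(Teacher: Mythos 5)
Your proposal is correct and follows essentially the same route as the paper: apply Theorem~\ref{t4}, control the variance term by classifying pairs of copies according to their overlap (the paper does this by invoking \cite[Corollary 4.22]{R} and parametrizing by the intersection subgraph $H'_i$ with $\vert\Gamma_i^{H_i',\alpha_i}\vert\le n^{v_{H_i}-v_{H'_i}}$, while you parametrize by the shared-edge count $k$ and convert at the end via $\ell_{k,i,i}=v_{H'_i}$, $k=e_{H'_i}$), and bound the cross-covariances exactly as in the paper's Lemma~\ref{lr}. Both arguments also rely on the same implicit inequality $p^{-k}\le n^{k/d_{H_i}}$ when collapsing the overlap sum into the single term $n^{v_{H_i}-\gamma_i}p^{e_{H_i}}$, so there is no gap in your write-up relative to the paper's own proof.
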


\subsubsection{Proofs of Theorem \ref{t4} and Corollary \ref{t5}}

\begin{proof}[Proof of Theorem \ref{t4}]
For $i=1, \ldots, d$, let $\Gamma_i$ denote the set of all copies  of $H_i$ in the complete graph of $n$ vertices $K_n$.
Let $X^i_{\alpha_i}$ the indicator that there is a copy of $H_i$ in $\mathcal G(n, p)$ at $\alpha_i$, 
and set $W_i=\sum_{\alpha_i \in \Gamma_i} X^i_{\alpha_i}$.
We apply Theorem \ref{i1} to the vector ${\bf W}$ by constructing a size-biased coupling of ${\bf W}$ using Corollary \ref{c} 
and the fact that each $W_i$ is a sum of exchangeable indicators. 

For all $i=1, \ldots, d$, $j=1, \ldots, i$, and $\alpha_i \in \Gamma_i$, 
let
$X_{\beta_j}^{j,(i,\alpha_i)}$ be the indicator 
that there is a copy of $H_i$ in $\mathcal G(n, p) \cup \{ \alpha_i\}$ at $\beta_j \in \Gamma_j$, 
where
$\mathcal G(n, p) \cup \{ \alpha_i\}$ denotes 
the graph obtained by
adding the minimum edges necessary to $\mathcal G(n, p)$ 
such that $\mathcal G(n, p) \cup \{ \alpha_i\}$ contains a copy of $H_i$ at $\alpha_i$.

Then the following facts imply the theorem:
\begin{itemize}
\item $
\mathcal{L}\,({\bf X}^{i,(i,\alpha_i)})=\mathcal{L}\,({\bf X}^{i,\alpha_i}  \mid X^i_{\alpha_i}=1),
$
where 
$$
{\bf X}^{i,(i,\alpha_i)}
= \bR{
(X_{\beta}^{1,(i,\alpha_i)})_{\beta \in \Gamma_1}, 
\ldots, (X_{\beta}^{i-1,(i,\alpha_i)})_{\beta \in \Gamma_{i-1}},
(X_{\beta}^{i,(i,\alpha_i)})_{\beta \in \Gamma_i, \beta \neq \alpha_i}
}
$$
and
$$
{\bf X}^{i,\alpha_i}
=\bR{
(X_{\beta}^1)_{\beta \in \Gamma_1}, \ldots, 
(X_{\beta}^{i-1})_{\beta \in \Gamma_{i-1}},(X_{\beta}^{i})_{\beta \in \Gamma_i, \beta \neq \alpha_i}
}.
$$

\item For all $i=1, \ldots, d$, $j=1, \ldots, i-1$, $\alpha_i \in \Gamma_i$, and $\beta_j \in \Gamma_j$, we have  
$X_{\beta_j}^{j,(i,\alpha_i)}\geq  X_{\beta_j}^{j}$,
and for $j=i$, $\beta_i \neq \alpha_i$, we have $X_{\beta_i}^{i,(i,\alpha_i)}\geq  X_{\beta_i}^{i}$,
by noting that we only add edges in the size-biased coupling construction.

\item $\E(X^i_{\alpha_i})= p^{e_{H_i}}$ for all $\alpha \in \Gamma_i$.
\end{itemize}
This completes the proof.
\end{proof}

Observe that applying the one-dimensional bound of \cite[Corollary 4.22]{R}, we obtain the  following bound for the first error term in Theorem \ref{t4}. For $i=1, \ldots, d$, $\alpha_i \in \Gamma_i$, and $H'_i \subseteq  H_i$, let $\Gamma_i^{H'_i, \alpha_i} \subseteq \Gamma_i$ be the set of subgraphs of $K_n$ isomorphic to $H_i$,
whose  intersection  with $\alpha_i$ is $H'_i$.
Then, for $i=1, \ldots, d$,
\begin{equation} \label{c4a}
\textnormal{Var}(W_i)-\lambda_i+2 \lambda_i p^{e_{H_i}} 
\leq
\lambda_i \bigg( p^{e_{H_i}}+\sum_{H'_i \subsetneq H_i: e_{H'_i} >0} \vert  \Gamma_{i}^{H_i', \alpha_i} \vert p^{e_{H_i}-e_{H_i'}}\bigg),
\end{equation}
where the terms $\vert  \Gamma_{i}^{H_i', \alpha_i} \vert$ account for  the number  of covariance terms for different types of pair indicators.

We next give a more explicit bound on the second error term in Theorem \ref{t4}. 
Recall that $a_H$ denotes the size of the set of automorphisms of graph $H$.
We observe that for $i=1, \ldots, d$,
\begin{equation*} 
\E(W_i)
= |\Gamma_i| p^{e_{H_i}}
=\binom{n}{v_{H_i}} \dfrac{v_{H_i}!}{a_{H_i}} p^{e_{H_i}}.
\end{equation*}
We have the following asymptotic result, 
which extends Lemma 4.1 in Krokowski and Th\"ale \cite{KT} to the case where $p$ depends on $n$,
and with errors therein corrected.
\begin{lemma} \label{lr}
Under the same conditions as in Corollary \ref{t5}, for all $i \in \{2, \ldots, d\}$ and $j \in \{1, \ldots, i-1\}$, we  have, for sufficiently large $n$,
\begin{equation*} \begin{split}
\frac{ \textnormal{Cov}( W_i,W_j)}{\E(W_i)\E(W_j)}
 = 
 O \bigg( \sum_{k \in \mathcal{K}_{i,j}} p^{-k} n^{-\ell_{k,i,j}}\bigg).
\end{split}
\end{equation*}
\end{lemma}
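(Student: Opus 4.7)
The plan is to expand the covariance combinatorially and group pair-contributions by the number of shared edges. Writing
\[
\textnormal{Cov}(W_i,W_j)
=\sum_{\alpha_i \in \Gamma_i}\sum_{\beta_j \in \Gamma_j} \textnormal{Cov}(X^i_{\alpha_i}, X^j_{\beta_j}),
\]
I would first observe that $X^i_{\alpha_i}$ and $X^j_{\beta_j}$ depend on disjoint sets of edge-indicators exactly when $\alpha_i$ and $\beta_j$ share no edges, which forces the covariance to vanish in that case (sharing vertices alone is not enough, since the edge indicators are still independent). This restricts the sum to pairs with $e_{\alpha_i,\beta_j}=k\ge 1$, i.e.\ to $k\in \mathcal{K}_{i,j}$.

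Next, for a pair with exactly $k$ shared edges, I would use independence across distinct edges of $K_n$ to obtain
\[
\textnormal{Cov}(X^i_{\alpha_i},X^j_{\beta_j})
= p^{e_{H_i}+e_{H_j}-k}\bigl(1 - p^{k}\bigr)
= O\bigl(p^{e_{H_i}+e_{H_j}-k}\bigr),
\]
since $p\to 0$. The combinatorial counting step is to bound, for each $k\in\mathcal{K}_{i,j}$, the number of pairs $(\alpha_i,\beta_j)\in\Gamma_i\times\Gamma_j$ sharing exactly $k$ edges. I would stratify by the number $m$ of vertices incident to the shared edges; by definition of $\ell_{k,i,j}$, one has $m\ge \ell_{k,i,j}$, and for each admissible $m$ the number of such pairs is $O(n^{v_{H_i}+v_{H_j}-m})$ (choose the $m$ shared vertices, then the remaining $v_{H_i}-m$ and $v_{H_j}-m$ vertices from $[n]$, times an $n$-independent factor depending only on the isomorphism types). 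Since $p\to 0$ is not relevant here and $m$ ranges over a finite set bounded by $v_{H_i}\wedge v_{H_j}$, the sum is dominated by its smallest exponent, giving $O(n^{v_{H_i}+v_{H_j}-\ell_{k,i,j}})$.

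Combining the two estimates yields
\[
\textnormal{Cov}(W_i,W_j)
= O\bigg(\sum_{k\in\mathcal{K}_{i,j}} n^{v_{H_i}+v_{H_j}-\ell_{k,i,j}} \, p^{e_{H_i}+e_{H_j}-k}\bigg),
\]
and dividing by $\E(W_i)\E(W_j) = \binom{n}{v_{H_i}}\binom{n}{v_{H_j}} \frac{v_{H_i}!\,v_{H_j}!}{a_{H_i}a_{H_j}} p^{e_{H_i}+e_{H_j}} = \Theta(n^{v_{H_i}+v_{H_j}}p^{e_{H_i}+e_{H_j}})$ produces the stated bound. The main bookkeeping obstacle is the vertex-overlap stratification: one must verify that the definition of $\ell_{k,i,j}$ as the \emph{minimum} vertex-overlap realizing $k$ shared edges indeed captures the dominant contribution uniformly in $n$ for large $n$, and that constants absorbed into the $O(\cdot)$ are $n$-independent—this uses that $H_i,H_j$ are fixed graphs so the number of isomorphism types of shared subgraphs is $O(1)$.
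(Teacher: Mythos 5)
Your proposal is correct and follows essentially the same route as the paper: expand the covariance over pairs of copies, note it vanishes unless edges are shared, group by the number $k$ of shared edges, and bound the number of pairs realizing $k$ shared edges via the minimal vertex-overlap $\ell_{k,i,j}$. If anything, your stratification over the vertex-overlap $m\ge \ell_{k,i,j}$ for general $k$ is spelled out more completely than the paper's argument, which only works out $k=1$ and $k=2$ explicitly and asserts the general pattern.
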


\begin{proof}
The  proof follows along the same lines as  in \cite[Lemma 4.1]{KT}, 
but taking into account the fact that $p$ depends on $n$. 
Let $i \in \{2, \ldots, d\}$ and $j \in \{1, \ldots, i-1\}$. Then
for sufficiently large $n$,
\begin{equation*}
\begin{split}
&\textnormal{Cov}( W_i,W_j) 
=\sum_{\alpha_i  \in \Gamma_i, \beta_j \in \Gamma_j\,:\, e_{\alpha_i,\beta_j}\geq 1} 
\left(\E(X^i_{\alpha_i} X^j_{\beta_j})- \E(X^i_{\alpha_i}) \E( X^j_{\beta_j}) \right)\\
&=\sum_{\alpha_i  \in \Gamma_i, \beta_j \in \Gamma_j: e_{\alpha_i,\beta_j}\geq 1} \left(p^{e_{H_i}+e_{H_j}-e_{\alpha_i,\beta_j}}- p^{e_{H_i}} p^{e_{H_j}} \right)\\
&=\sum_{k \in \mathcal{K}_{i,j}}\sum_{(\alpha_i,\beta_j) \in \mathcal{S}_k(i, j)}  p^{e_{H_i}+e_{H_j}-k}(1-p^k)
\leq \sum_{k \in \mathcal{K}_{i,j}} \Delta_k,
\end{split}
\end{equation*}
where
$\Delta_k = p^{-k} \sum_{(\alpha_i,\beta_j) \in \mathcal{S}_k(i, j)}p^{e_{H_i}+e_{H_j}}$.

Next, we observe that  since the $v_{H_i}>0$ and  the graphs $H_i$ have no isolated vertices, we have that 
$1 \in  \mathcal{K}_{i,j}$. Let  $k=1$.
We have  $\binom{n}{v_{H_i}} \frac{v_{H_i}!}{a_{H_i}}=\frac{\E(W_i)}{p^{e_{H_i}}}$ possible choices for $\alpha_i \in \Gamma_i$.
Since $k=1$, given $\alpha_i \in \Gamma_i$, we need to compute the number of 
$\beta_j \in \Gamma_j$ that have exactly one edge in common with $\alpha_i$, by observing that we have $e_{H_i}$ possibilities to choose this common edge in $\alpha_i$.
Once the  common edge in $\alpha_i$ is fixed, we  have  $\binom{n-v_{H_i}}{v_{H_j}-2}$ possibilities to choose
the $v_{H_j}-2$ remaining vertices of $\beta_j$, $e_{H_j}$ possibilities to choose  the common  edge  in $\beta_j$, and $2(v_{H_j}-2)! / a_{H_j}$ distinct possible arrangements of the  vertices.
Then
\begin{equation*}
\Delta_1
= \E(W_i) e_{H_i}e_{H_j}
\binom{n-v_{H_i}}{v_{H_j}-2} \dfrac{2(v_{H_j}-2)!}{a_{H_j}}
p^{e_{H_j}-1} 
=\E(W_i)\E(W_j) O(n^{-2}p^{-1} ).
\end{equation*}

If $k=2$, that is, $\alpha_i$ and $\beta_j$ have exactly two common edges, then we have two possibilities. The first one is that  the  edges share  one common
vertex, thus, there are only three distinct vertices. In  this case, following as above we find that the contribution in the sum is $\E(W_i)\E(W_j)O(n^{-3}p^{-2})$. The second possibility is that the edges  do not share any vertex, that is, we have 4 distinct vertices. In this case the  contribution is $\E(W_i)\E(W_j)O(n^{-4}p^{-2})$.
Therefore, for $k\geq 2$, we observe that 
the terms that will contribute in the bound will be those
for which the $k$ edges in common have a  minimum number of distinct vertices, 
that is, $\ell_{k,i,j}$. 
This concludes the proof.
\end{proof}


\begin{proof}[Proof of Corollary \ref{t5}]
Applying the bound  (\ref{c4a}) and Lemma \ref{lr}, we get, from Theorem \ref{t4}, that
for sufficiently large $n$, the quantity $d_{\mathrm{W}}({\bf W}, {\bf  P_{\bm\lambda}})$ is bounded by
  \begin{equation*} 
  \begin{split}
&C \bigg\{\sum_{i=1}^d  \min\{1,\lambda_i\}
\bigg(p^{e_{H_i}}+\sum_{H'_i \subsetneq H_i: e_{H'_i} >0} \vert  \Gamma_{i}^{H_i', \alpha_i} \vert p^{e_{H_i}-e_{H_i'}}\bigg)
 +\sum_{i=2}^d  \sum_{j=1}^{i-1}\lambda_j\sum_{k \in \mathcal{K}_{i,j}} p^{-k} n^{-\ell_{k,i,j}}\bigg\} \\
&\leq C \bigg\{\sum_{i=1}^d  \min\{1,\lambda_i\}
\bigg(p^{e_{H_i}}+n^{v_{H_i}} p^{e_{H_i}}\sum_{H'_i \subsetneq H_i: e_{H'_i} >0} \left( n^{v_{H'_i}/e_{H'_i}-v_{H_i}/e_{H_i}}\right)^{-e_{H'_i}}\bigg)\\
&\qquad \qquad  \qquad  
+\sum_{i=2}^d \sum_{j=1}^{i-1} \lambda_j \sum_{k \in \mathcal{K}_{i,j}} p^{-k} n^{-\ell_{k,i,j}}\bigg\},
\end{split}
\end{equation*}
by noting that $\vert  \Gamma_{i}^{H_i', \alpha_i} \vert \le n^{v_{H_i} - v_{H'_i}}$.
This completes the proof.
\end{proof}

\subsubsection{Strictly balanced graphs}

We now apply Corollary \ref{t5} to a wide class of graphs 
called {\it strictly balanced} in order to obtain an exact convergence rate.
A graph $H$ is called 
\textit{strictly balanced} if $$d_{H'} < d_H  \text{ whenever } H' \subsetneq H,
\quad\text{where } d_H=\frac{e_H}{v_H}
$$ 
is the density of the graph $H$.
It is well-known that trees, cycles, and complete graphs are all strictly balanced graphs.
We have the following consequence Corollary \ref{t5} when all the graphs are strictly balanced.

\begin{theorem} \label{t5b}
Let $\mathcal G(n,p)$ be an Erdős-Rényi random graph with edge probability 
\[
p = cn^{-1/\alpha}(1 + o(1)),
\] 
where $c > 0$ and $\alpha > 0$ are fixed constants. Let $(H_1,\ldots, H_d)$  be a sequence of $d$
strictly balanced distinct graphs with $0 < v_{H_i} \leq n$ vertices, $e_{H_i} > 0$ edges, for $i=1,\ldots,d$, and no isolated vertices,  Let $W_i$ denote the number of copies of $H_i$ in $\mathcal G(n,p)$.
Define $I = \{i : d_{H_i} = \alpha\}$ as the set of critical indices. Then as $n \to \infty$,
we have the following.
\begin{enumerate}
\item For $i \notin I$: If $d_{H_i} < \alpha$ then
    \[
    \frac{W_i}{\mathbb{E}(W_i)} \xrightarrow{P} 1 \text{ at rate } n^{-\gamma_i},
    \]
    where 
\[
\gamma_i = \gamma_i(\alpha) = \min_{k \in \mathcal{K}_{i,i}} \left\{ \ell_{k,i,i} - \dfrac{k}{\alpha} \right\} >0.
\]
    If $d_{H_i} > \alpha$ then
    \[\mathbb{P}(W_i > 0) \leq \exp(-Cn^{\eta_i}),\]
    where 
    \[
    \eta_i = \eta_i(\alpha) = v_{H_i}(d_{H_i}/\alpha - 1) > 0.
    \]

\item For the critical subsequence $\mathbf{W}_I=(W_i)_{i \in I}$, we have, 
\[
d_{\mathrm{W}}(\mathbf{W}_I, {\bf P_{\boldsymbol{\lambda}_I}})\leq Cn^{-\gamma},
\]
where
$\boldsymbol{\lambda}_I = (\lambda_i)_{i\in I}$ with $\lambda_i =\E(W_i)\rightarrow c^{\alpha v_{H_i}}/a_{H_i}$
and
\begin{equation} \label{gamma}
\gamma 
= \min_{i\in I} \gamma_i(\alpha)
= \min_{i\in I} \min\{v_{H'_i} - e_{H'_i}/\alpha : H'_i \subsetneq H_i, e_{H'_i} > 0\}>0.
\end{equation}
\end{enumerate}
\end{theorem}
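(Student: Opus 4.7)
The plan is to split along the density regime of each $H_i$. Plugging $p = cn^{-1/\alpha}(1+o(1))$ into $\lambda_i = \binom{n}{v_{H_i}}(v_{H_i}!/a_{H_i})\, p^{e_{H_i}}$ yields the asymptotic
\[
\lambda_i \sim \frac{c^{e_{H_i}}}{a_{H_i}}\, n^{v_{H_i}(1 - d_{H_i}/\alpha)},
\]
which immediately classifies the three regimes: $\lambda_i \to \infty$ when $d_{H_i} < \alpha$, $\lambda_i \to 0$ at polynomial rate $n^{-\eta_i}$ when $d_{H_i} > \alpha$, and $\lambda_i \to c^{\alpha v_{H_i}}/a_{H_i}$ when $i \in I$.

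For Part 1, both non-critical cases follow from elementary moment inequalities. When $d_{H_i} > \alpha$, Markov's inequality gives $\P(W_i > 0) \le \E(W_i) = O(n^{-\eta_i})$; if the stronger exponential bound stated in the theorem is intended, a Janson-type inequality would be invoked in place of Markov. When $d_{H_i} < \alpha$, Chebyshev's inequality reduces the claim to bounding $\Var(W_i)/\lambda_i^2$. The one-dimensional estimate \eqref{c4a}, combined with $|\Gamma_i^{H'_i,\alpha_i}| = O(n^{v_{H_i} - v_{H'_i}})$ and $p \asymp n^{-1/\alpha}$, yields $\Var(W_i)/\lambda_i^2 = O(n^{-\gamma_i})$, and strict balance of $H_i$ guarantees $\gamma_i > 0$.

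For Part 2, apply Corollary \ref{t5} to the critical vector $\mathbf{W}_I$ and bound each summand. In the first sum, $\min\{1,\lambda_i\}\, p^{e_{H_i}} = O(n^{-v_{H_i}})$ is negligible, while $\min\{1,\lambda_i\}\, n^{v_{H_i} - \gamma_i} p^{e_{H_i}} = O(n^{-\gamma_i})$ by observing that $n^{v_{H_i}} p^{e_{H_i}} = \Theta(1)$ for $i \in I$. Each cross term $\lambda_j\, p^{-k} n^{-\ell_{k,i,j}}$ is $O(n^{-(\ell_{k,i,j} - k/\alpha)})$. The decisive observation is that the $k$ shared edges between copies of $H_i$ and $H_j$ form a subgraph $H'$ on $\ell_{k,i,j}$ vertices that embeds into both $H_i$ and $H_j$; since $H_i$ and $H_j$ are distinct strictly balanced graphs of common density $\alpha$, $H'$ cannot coincide with either one (equality, say $H' \cong H_i$, would force $H_i \subsetneq H_j$ and hence $d_{H_i} < \alpha$ by strict balance, a contradiction). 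Thus $H' \subsetneq H_i$, whence $\ell_{k,i,j} - k/\alpha \ge \gamma_i(\alpha) \ge \gamma$, and all covariance terms contribute at most $O(n^{-\gamma})$. Taking the minimum over $i \in I$ gives the claimed rate.

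The main obstacle is this last step: tracking the cross-covariance terms requires the clean identification of shared-edge subgraphs as \emph{proper} subgraphs on both sides, so that strict balance forces $\ell_{k,i,j} - k/\alpha > 0$ uniformly in $k \in \mathcal{K}_{i,j}$ and $i,j \in I$. Everything else reduces to careful exponent accounting once the asymptotic of $\lambda_i$ is in hand.
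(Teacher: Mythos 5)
Your proposal follows essentially the same route as the paper: first-moment classification of $\lambda_i$ by regime, Chebyshev plus the variance/covariance estimates for $d_{H_i}<\alpha$, Markov for $d_{H_i}>\alpha$, and Corollary \ref{t5} with exponent accounting for the critical block; your use of \eqref{c4a} in place of Lemma \ref{lr} for the diagonal term is an equivalent bookkeeping choice. Two remarks. First, your argument that the shared-edge subgraph $H'$ is a \emph{proper} subgraph of $H_i$ (since $H'\cong H_i$ would force $H_i\subsetneq H_j$ and hence $d_{H_i}<\alpha$ by strict balance) makes explicit a step the paper's proof only gestures at, and it is exactly what is needed to conclude $\ell_{k,i,j}-k/\alpha\ge\gamma$. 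Second, your hesitation about the $d_{H_i}>\alpha$ case is well founded: Markov gives only $\P(W_i>0)\le \E(W_i)=O(n^{-\eta_i})$, and the exponential bound $\exp(-Cn^{\eta_i})$ claimed in the statement cannot hold, since $\P(W_i>0)\ge p^{e_{H_i}}=\Theta\bigl(n^{-e_{H_i}/\alpha}\bigr)$ is only polynomially small; the paper's own proof at this point asserts the false inequality $c^{e_{H_i}}n^{v_{H_i}(1-d_{H_i}/\alpha)}\le\exp(-Cn^{\eta_i})$ (and no Janson-type device can rescue it, as Janson's inequality bounds $\P(W_i=0)$, not $\P(W_i>0)$). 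So your version, with the polynomial rate $O(n^{-\eta_i})$, is the correct form of that part of the claim.
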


\begin{proof}
When $d_{H_i} < \alpha$, we use the second moment method. First note that
$
\mathbb{E}[W_i] \to \infty.
$
By Chebyshev's inequality and Lemma \ref{lr}, for all $\epsilon>0$,
\[
\mathbb{P}\left(\left|\frac{W_i}{\mathbb{E}[W_i]} - 1\right| \geq \epsilon\right) \leq \frac{\text{Var}(W_i)}{\epsilon^2\mathbb{E}[W_i]^2} 
= O \bigg( \sum_{k \in \mathcal{K}_{i,i}} n^{k/\alpha-\ell_{k,i,i}}\bigg)
= O\left(n^{-\gamma_i}\right),
\]
in view of the definition of $\gamma_i$.
By the strictly balanced property, we have
$k / \ell_{k,i,i} < e_{H_i} / v_{H_i} = d_{H_i} < \alpha$, and therefore $\gamma_i>0$.

When $d_{H_i} > \alpha$, we have that
\[
\mathbb{E}[W_i] 
\leq n^{v_{H_i}}(cn^{-1/\alpha})^{e_{H_i}} 
= c^{e_{H_i}}n^{v_{H_i}(1 - d_{H_i}/\alpha)}
\leq \exp(-Cn^{\eta_i}),
\]
with $\eta_i>0$.
By Markov's inequality,
\[\mathbb{P}(W_i > 0) \leq \mathbb{E}[W_i] \leq \exp(-Cn^{\eta_i}).\]

We now consider the case $d_{H_i} = \alpha$ for all $i \in I$.
First observe that for all $i \in I$, as $n \rightarrow \infty$,
$
\E(W_i) \rightarrow  c^{\alpha v_{H_i}}/a_{H_i}.
$
  We next bound the two terms in Corollary \ref{t5}. For this step, in order to simplify the exposition, we assume, without loss of generality, that $I=\{1,\ldots, d\}$.
We start bounding the first sum. We observe that $\lambda_i=O(1)$ and that $n^{v_{H_i}}p^{e_{H_i}}=O(1)$. In particular,
$p^{e_{H_i}}=p^{e_{H_i}}n^{v_{H_i}}n^{-v_{H_i}}=O(n^{-v_{H_i}})$.
Then,  recalling the definition of $\gamma$ in \eqref{gamma} and observing that $\gamma>0$ since all the graphs are strictly balanced, we obtain that for large $n$,
\begin{equation*} 
\sum_{i=1}^d  \min\{1,\lambda_i\} 
\big(p^{e_{H_i}}+n^{v_{H_i}-\gamma_i} p^{e_{H_i}}\big)
=O(n^{-\gamma}).
\end{equation*}
We next bound the second sum. By the definition of $p$, we have that for some constant $C>0$ and large $n$,
\begin{equation*} \begin{split}
\sum_{i=2}^d 
\sum_{j=1}^{i-1} \lambda_j
\sum_{k \in \mathcal{K}_{i,j}} p^{-k} n^{-\ell_{k,i,j}} 
\leq C \sum_{i=2}^d \sum_{j=1}^{i-1}\sum_{k \in \mathcal{K}_{i,j}} n^{-(\ell_{k,i,j}-k/\alpha)}.
\end{split}
\end{equation*}
Now, observe  that given $(i,j)$ since $\alpha=d_{H_i}$ and all the graphs have the same density and are strictly balanced, we have that
$
\ell_{k,i,j}-k/\alpha \geq \gamma>0
$
since if a pair $(k,\ell_{k,i,j})$ is in the sum, then the graph with $k$ edges and $\ell_{k,i,j}$ vertices is a subgraph of both $H_i$ and $H_j$. Thus, 
the second sum is also $O(n^{-\gamma})$ as $n$ is large,
which implies the desired result.
\end{proof}

\begin{remark} \label{re}
We note that it is possible to get results for subgraphs that are not strictly balanced,
as Corollary \ref{t5} does not assume `balancedness'.
In particular,
in the case where $H$ has a unique densest subgraph,
the desired result can be deduced immediately from our results. 
However, other cases are
more delicate, with different subgraphs of $H$ `competing'. 
One would need to incorporate considerations similar to 
those in the determination the threshold of appearance of $H$, as was done by Bollob\'as 
\cite[Section 4.2]{bela} to obtain approximation errors for particular graphs.
\end{remark}

We end this section with an example of application of Theorem \ref{t5b}.
\begin{example}[Vector of cycles] 
Let $d \geq 2$ and let $H_1,\ldots,H_d$ be $d$ distinct cycles, where each $H_i$ is of length $k_i \geq 3$, 
with $k_1 < k_2 < \cdots < k_d$. 
Then we have
    \begin{align*}
    \mathbb{E}(W_i)
    = \frac{n(n-1)\cdots(n-k_i+1)}{2k_i} p^{k_i}.
    \end{align*}
For any two cycles $H_i$ and $H_j$ with $i < j$, the maximum shared edges is $M_{i,j} = k_i-1$,
and for $k = 1,2,\ldots,k_i-1$, the minimum vertices for $k$ shared edges is
$\ell_{k,i,j} = k+1$.
Since all cycles have density equal to one, taking $p = cn^{-1}(1 + o(1))$, by Theorem \ref{t5b}, we obtain,
by noting that $\gamma=1$, that for $n$ sufficiently large,
\begin{equation*}
d_{\mathrm{W}}(\mathbf W, {\bf  P_{\bm\lambda}}) \leq Cn^{-1},
\end{equation*}
where $\lambda_i=\E(W_i) \rightarrow c^{k_i}/(2k_i)$ for $i = 1,\ldots,d$.
\end{example}

\subsection{Multivariate hypergeometric distribution}

In this section we give Poisson approximation of the multivariate hypergeometric distribution, 
starting with its definition.

\begin{definition}[Multivariate hypergeometric distribution]
Consider an urn containing $N$ balls of $d$ different colors, 
with $n_i$ balls of color $i$ ($i=1,\ldots,d$) such that $\sum_{i=1}^d n_i = N$. 
When drawing $m$ balls without replacement, 
let $\mathbf{W} = (W_1,\ldots,W_d)$, 
where $W_i$ denotes the number of balls of color $i$ in the sample.
Then $\mathbf{W}$ follows a multivariate hypergeometric distribution.
\end{definition}

Next, as a consequence of the decreasing size-biased coupling in Theorem \ref{dd}, we obtain the following  Poisson approximation, which is the multivariate extension of \cite[Example 4.32]{R}. Observe that in this application of Theorem \ref{dd} the $p_{i,j}$'s do not depend on $i$ nor $j$.
\begin{theorem}
Let $\mathbf{W} = (W_1,\ldots,W_d)$ follow the multivariate hypergeometric distribution defined as above.
Then
\begin{align*} 
d_{\mathrm{W}}(\mathbf{W},{\bf  P_{\bm\lambda}}) 
&\le \sum_{i=1}^d \min\left\{1,\frac{mn_i}{N}\right\} 
\left(1 - \frac{(N-n_i)(N-m)}{N(N-1)} \right)
+ \frac{2(N-m)}{N(N-1)}
\sum_{i=2}^d \sum_{j=1}^{i-1} n_j,
\end{align*}
where $\lambda_i = \E( W_i ) = mn_i/N$ for each $i=1,\ldots,d$.
\end{theorem}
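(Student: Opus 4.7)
The plan is to apply Theorem \ref{dd} after writing each $W_i$ as a sum of exchangeable Bernoulli indicators and constructing the required decreasing size-biased coupling by a swap procedure. Label the balls of color $i$ by $\ell = 1,\ldots,n_i$ and set $X^i_\ell = \1_{\{\text{ball }(i,\ell)\in\text{sample}\}}$, so that $W_i = \sum_{\ell=1}^{n_i} X^i_\ell$ with $p_{i,\ell} = m/N$ for every pair $(i,\ell)$. Since the draw is exchangeable across balls, I invoke Corollary \ref{c}: define ${\bf X}^{i,(i,1)}$ by running the original urn draw to obtain a uniform $m$-subset $S$, and then, if $(i,1)\notin S$, inserting $(i,1)$ in place of a ball chosen uniformly at random from $S$ (and keeping $S$ as is otherwise). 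A short calculation partitioning by which ball is displaced verifies that the output is uniform over $m$-subsets containing $(i,1)$, i.e.\ $\mathcal{L}({\bf X}^{i,(i,1)})=\mathcal{L}({\bf X}^{i,1}\mid X^i_1=1)$, as required.

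This swap never adds any ball other than $(i,1)$, so every indicator except $X^i_1$ either stays the same or drops from $1$ to $0$; in particular, $X^{j,(i,1)}_k \le X^j_k$ for all $j<i$ and all $k$, and $X^{i,(i,1)}_k \le X^i_k$ for all $k\neq 1$, so the decreasing hypothesis of Theorem \ref{dd} holds. The standard hypergeometric moment identities, obtained from $\E(X^i_\ell)=m/N$ and $\P(X^i_\ell=1,X^j_k=1)=m(m-1)/[N(N-1)]$ for $(i,\ell)\ne(j,k)$, give
\begin{align*}
\lambda_i &= \frac{m n_i}{N}, \qquad \Var(W_i) = \frac{m n_i(N-n_i)(N-m)}{N^2(N-1)}, \\
\Cov(W_i,W_j) &= -\frac{m n_i n_j(N-m)}{N^2(N-1)} \quad (i\ne j).
\end{align*}

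Substituting into Theorem \ref{dd}, the variance contribution factors as
\[
\min\{1,\lambda_i^{-1}\}\bigl(\lambda_i-\Var(W_i)\bigr)
= \min\{1,\lambda_i\}\left(1-\frac{(N-n_i)(N-m)}{N(N-1)}\right),
\]
using $\lambda_i-\Var(W_i)=\lambda_i[1-(N-n_i)(N-m)/(N(N-1))]$ together with the identity $\min\{1,\lambda_i^{-1}\}\lambda_i=\min\{1,\lambda_i\}$. The covariance contribution is $-2\lambda_i^{-1}\Cov(W_i,W_j)=2n_j(N-m)/[N(N-1)]$, which is independent of $i$, so the nested sum collapses to $[2(N-m)/(N(N-1))]\sum_{i=2}^d\sum_{j=1}^{i-1}n_j$; combining the two contributions yields exactly the stated bound. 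The only mildly delicate step is verifying that the swap construction really reproduces the target conditional law $\mathcal{L}({\bf X}^{i,1}\mid X^i_1=1)$; everything else is routine algebra with the explicit hypergeometric moments.
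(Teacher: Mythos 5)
Your proposal is correct and follows essentially the same route as the paper: the same indicator decomposition with $p_{i,\ell}=m/N$, the same swap construction (insert the distinguished ball and evict a uniformly chosen one) to realize the decreasing size-biased coupling, and the same hypergeometric moment identities substituted into Theorem \ref{dd}. The swap-law verification you flag as the delicate step does check out by the partition argument you describe, so nothing is missing.
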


\begin{remark}
As in  the one-dimensional case, this approximation is useful when $N$ is large 
and $m/N$ and $n_i/N$ are small for all $i=1,\ldots,d$.
\end{remark}

\begin{proof}
For each color $i \in \{1,\ldots,d\}$, note that $W_i$ can be written as a sum of indicator random variables
$
W_i = \sum_{j=1}^{n_i} X^i_j,
$
where $X^i_j$ is the indicator that ball $j$ of color $i$ is chosen in the $m$-sample
for $j = 1, \ldots, n_i$.
So $\P(X_j^i=1) = m/N$.
For $\mathbf{W} = (W_1,\ldots,W_d)$,
we construct its size-biased coupling 
${\bf  \widetilde W}=({\bf \widetilde W}^{1}, \ldots, {\bf \widetilde W}^{d})$
with the random vector $\mathbf{\widetilde W}^{i}$ defined as follows for each $i \in \{1,\ldots,d\}$.
Let $I_i$ be uniformly distributed on $\{1,\ldots,n_i\}$, independent of all else. Then, if ball $I_i$ of color $i$ is already in the $m$-sample, 
let $\mathbf{\widetilde W}^{i} = \mathbf{W}^{i}$; otherwise,
we add ball $I_i$ of color $i$ to the sample,
and remove a uniformly chosen ball from the current sample.
Recall that the law of coupling from \eqref{law}
specifies that the distribution of the coupled random vector matches 
the conditional distribution of the original vector given that ball $\ell$ of color $i$ was selected.
Then we have, for all $i=1, \ldots, d$, that
we can write the ${\bf \widetilde W}^{i}$ defined above as
\begin{align*}
{\bf \widetilde W}^{i}
= \bR{ \sum_{j=1}^{n_1} X_{j}^{1,(i,I_i)}, \ldots, \sum_{j=1}^{n_{i-1}} X_{j}^{i-1,(i,I_i)},
\sum_{j=1, j \neq I_i}^{n_i} X_{j}^{i,(i,I_i)}+1 },
\end{align*}
where for colors $k < i$,
$\sum_{j=1}^{n_k} X_j^{k,(i,I_i)}$ represents the count of color $k$ balls after coupling, 
and for color $i$, 
$\sum_{j=1,j\neq I_i}^{n_i} X_j^{i,(i,I_i)} + 1$ represents the count including the mandatory ball $I_i$.
Then $X_{k}^{j,(i,\ell)}\leq  X_{k}^{j}$ for $j=1, \ldots, i-1$, $\ell=1, \ldots, n_i$, and $k=1, \ldots, n_j$,
and for $j=i$ and $k \neq \ell$ we have $X_{k}^{i,(i,\ell)}\leq  X_{k}^{j}$,
and therefore, $\widetilde W^{i}_j \leq W_j$ for all $j \neq i$, 
as no additional balls of any color other than $i$ can be added during the coupling procedure. 
Thus, we can apply Theorem \ref{dd}.
Standard hypergeometric calculations yield
\begin{align}
\label{lam}
\mathbb{E}[W_i] = \frac{mn_i}{N} = \lambda_i,
\text{ and }
\text{Var}(W_i) = \frac{mn_i(N-n_i)(N-m)}{N^2(N-1)} = \lambda_i\frac{(N-n_i)(N-m)}{N(N-1)}.
\end{align}
For the covariance between counts with distinct colors $i \ne j$, similar calculations give
\[
\text{Cov}(W_i,W_j) = -\frac{mn_in_j(N-m)}{N^2(N-1)} = -\lambda_i\lambda_j\frac{N-m}{m(N-1)}.
\]
Hence by Theorem \ref{dd}, we have
\begin{align*}
d_{\mathrm{W}}({\bf W}, {\bf  P_{\bm\lambda}}) 
&\leq \sum_{i=1}^d \min\left\{1,\lambda_i\right\} 
\left(1 - \frac{(N-n_i)(N-m)}{N(N-1)} \right)
+ \frac{2(N-m)}{m(N-1)}
\sum_{i=2}^d \sum_{j=1}^{i-1} \lambda_j.
\end{align*}
Then the proof is complete
in view of the expectation formula in \eqref{lam}.
\end{proof}

\section*{Acknowledgement}

This project was initiated at the Probability and Combinatorics Workshop held at
the Bellairs Research Institute in March 2024. 
The authors thank the organisers, G\'abor Lugosi, Louigi Addario-Berry, and Nicolas Broutin. 
Special thanks go to G\'abor Lugosi for useful comments, suggestions, and discussions. 
The first author acknowledges support from the Spanish MINECO grant PID2022-138268NB-100 and
Ayudas Fundacion BBVA a Proyectos de Investigaci\'on Cient\'ifica 2021.

\end{document}